\newtheorem{theorem}{Theorem}[section]
\newtheorem{proposition}[theorem]{Proposition}
\newtheorem{lemma}[theorem]{Lemma}
\newtheorem{remark}[theorem]{Remark}
\newcommand\R{\mathbb{R}}
\newcommand{\la}{\lambda}
\newcommand{\supp}{{\rm supp}{\hspace{.05cm}}}
\numberwithin{equation}{section}
\theoremstyle{definition}
\title
[fractional Schr\"{o}dinger-Poisson systems]
 {The existence and concentration of positive ground state solutions for a class of fractional Schr\"{o}dinger-Poisson systems with steep potential well}
\author{Liejun Shen \ and \ Xiaohua Yao}
\address{ Liejun Shen,  Hubei Key Laboratory of Mathematical Sciences and School of Mathematics and Statistics,
 Central China Normal University, Wuhan, 430079, P. R. China }
\email{liejunshen@sina.com}
\address{Xiaohua Yao, Hubei Key Laboratory of Mathematical Sciences and School of Mathematics and Statistics,
 Central China Normal University, Wuhan, 430079, P.R. China}
\email{yaoxiaohua@mail.ccnu.edu.cn }
\date{\today}
\subjclass[2000]{ 35J20, 35J60, 35J92.}
\keywords{fractional Schr\"{o}dinger-Poisson systems, steep potential well, ground state, concentration, Nehari-Poho\v{z}aev identity.}
\begin{document}

\maketitle
\begin{abstract} The present study is concerned with the following fractional Schr\"{o}dinger-Poisson system with steep potential well:
$$
  \left\{%
\begin{array}{ll}
    (-\Delta)^s u+ \la V(x)u+K(x)\phi u= f(u), & x\in\R^3, \\
    (-\Delta)^t \phi=K(x)u^2, &  x\in\R^3,\\
\end{array}%
\right.
$$
where $s,t\in(0,1)$ with $4s+2t>3$, and $\la>0$ is a parameter. Under certain assumptions on $V(x)$, $K(x)$ and $f(u)$ behaving like $|u|^{q-2}u$ with $2<q<2_s^*=\frac{6}{3-2s}$, the existence of positive ground state solutions and concentration results are obtained via some new analytical skills and Nehair-Poho\v{z}aev identity. In particular, the monotonicity assumption on the nonlinearity is not necessary.
\end{abstract}


\section{Introduction and main results}
In the present paper, we are concerned with the existence and concentration of positive ground state solutions for the following fractional Schr\"{o}dinger-Poisson system:
\begin{equation}\label{mainequation1}
    \left\{%
\begin{array}{ll}
    (-\Delta)^s u+ \la V(x)u+K(x)\phi u= f(u), & x\in\R^3, \\
    (-\Delta)^t \phi=K(x)u^2, &  x\in\R^3,\\
\end{array}%
\right.
\end{equation}
where $s,t\in(0,1)$, $4s+2t>3$ and the parameter $\la>0$. On the potential $V(x)$, we need to make the following assumptions:
\vskip0.3cm
\begin{enumerate}[$(V_1)$]
  \item \emph{$V(x)\in C(\R^3,\R)$ with $V(x)\geq 0$ on $\R^3$;}
  \end{enumerate}
  \vskip0.3cm
\begin{enumerate}[$(V_2)$]
  \item \emph{there exists $c>0$ such that the set $\{V< c\}\triangleq\big\{x\in\R^3: V(x) < c\big\}$ has positive finite Lebesgue measure;}
      \end{enumerate}
       \vskip0.3cm
\begin{enumerate}[$(V_3)$]
\item  \emph{$\Omega=\text{int}V^{-1}(0)$
is nonempty and has smooth boundary with $\overline{\Omega}=V^{-1}(0)$, where $V^{-1}(0)\triangleq\{x\in\R^3: V (x) = 0\}$}.
\end{enumerate}
\vskip0.3cm
In their celebrated paper, T. Bartsch and Z. Wang \cite{Bartsch0} firstly proposed the above hypotheses to study a nonlinear Schr\"{o}dinger equation. The potential $\la V(x)$ with assumptions $(V_1)-(V_3)$ usually are called by the steep potential well.

Let us recall the history of the study for Schr\"{o}dinger-Poisson system
\begin{equation}\label{introduction2}
 \left\{%
\begin{array}{ll}
    -\Delta u+V(x)u+\phi u=f(x,u), & x\in\R^3, \\
    -\Delta \phi=u^2, &  x\in\R^3.\\
\end{array}%
\right.
\end{equation}
Due to the real physical meaning, the system \eqref{introduction2}
has been studied extensively by many scholars in the last several decades. Benci and Fortunato \cite{Benci1} introduced the system like \eqref{introduction2} to
describe solitary waves for nonlinear Schr\"{o}dinger type equations and look for the existence of standing waves interacting with an unknown electrostatic field. We refer the readers to \cite{Benci1,Benci2} and the references therein to get a more physical background of the system \eqref{introduction2}. Nearly Y. Jiang and H. Zhou \cite{Jiang} firstly applied the steep potential well to the Schr\"{o}dinger-Poisson system and proved the existence of nontrivial solutions and ground state solutions. Subsequently
by using the linking theorem \cite{Rabinowitz,Willem}, L. Zhao, H. Liu and F. Zhao \cite{Zhao} studied the existence and concentration of nontrivial solutions for the following Schr\"{o}dinger-Poisson system
\begin{equation}\label{Zhao}
    \left\{%
\begin{array}{ll}
    -\Delta u+ \la V(x)u+K(x)\phi u= |u|^{p-2}u, & x\in\R^3, \\
    -\Delta  \phi=K(x)u^2, &  x\in\R^3,\\
\end{array}%
\right.
\end{equation}
under the conditions
\begin{enumerate}[$\widetilde{({V_1})}$]
\item \emph{$V(x)\in C(\R^3,\R)$ and $V$ is bounded form below;}
\end{enumerate}
and $(V_2)-(V_3)$ with some suitable assumptions on $K(x)$ for $4\leq p<6$. It is worth mentioning that they specially established the existence and concentration of nontrivial solutions to \eqref{Zhao} by L. Jeanjean's monotonicity trick \cite{Jeanjean2} under the conditions $(V_1)-(V_3)$, $K(x)\geq 0$ for $x\in\R^3$ with $K(x)\in L^{\infty}_{\text{loc}}(\R^3)\cap L^2(\R^3)$ and
\begin{enumerate}[$\widetilde{({V_4})}$]
  \item \emph{$V(x)$ is weakly differentiable such that $(x,\nabla V)\in L^{p_1}(\R^3)$ for some $p_1\in[\frac{3}{2},\infty]$, and
 \[
2V(x)+(x,\nabla V)\geq 0,\ \ \text{for}\ \ a.e\ \ x\in\R^3,
\]
where $(\cdot,\cdot)$ is the usual inner product in $\R^3$.}
\end{enumerate}
\vskip3mm
\begin{enumerate}[$\widetilde{({K})}$]
  \item \emph{$K(x)$ is weakly differentiable such that $(x,\nabla K)\in L^{p_2}(\R^3)$ for some $p_2\in[2,\infty]$, and
\[
\frac{2(p-3)}{p}K(x)+(x,\nabla K)\geq 0,\ \ \text{for}\ \ a.e\ \ x\in\R^3.
\]
}
\end{enumerate}
Replaced $|u|^{p-2}u$ by $a(x)f(u)$ in \eqref{Zhao}, Du $et. al$ \cite{Du} proved the existence and asymptotic behavior of solutions under conditions $(V_1)-(V_3)$ or $\widetilde{(V_1)}-(V_2)-(V_3)$ and some suitable assumptions $a(x)$ and $K(x)$, where $\lim_{t\to\infty}f(t)/t=l\in(0,+\infty)$.
There are many interesting works about the existence of positive solutions, positive ground states,
multiple solutions, sign-changing solutions and semiclassical states to \eqref{introduction2}, see e.g. \cite{Alves,Alves2,Ambrosetti,Azzollini,Azzollini2,He,Huang,Ruiz0,Ruiz,Shen,Shen2,Sun2,Zhang,Zhao2}
and their references therein.

The nonlinear fractional Schr\"{o}dinger-Poisson systems \eqref{mainequation1} come from the following fractional Schr\"{o}dinger equation
\begin{equation}\label{introduction3}
(-\Delta)^s u+V(x)u=f(x,u), \ \   x\in\R^N \\
\end{equation}
used to study the standing wave solutions $\psi(x, t) = u(x)e^{-i\omega t}$ for the equation
\[
i\hbar \frac{\partial \psi}{\partial t}=\hbar^2(-\Delta)^{\alpha}\psi+W(x)\psi-f(x,\psi)\ \ x\in\R^N,
\]
where $\hbar$ is the Planck's constant, $W:\R^N\to\R$ is an external potential and $f$ a suitable nonlinearity.
Since the fractional Schr\"{o}dinger equation appears in problems involving nonlinear optics, plasma physics and condensed matter physics, it is one of the main objects of the fractional quantum mechanic.
The equation \eqref{introduction3} has been firstly proposed by Laskin \cite{Laskin1,Laskin2} as a result of
expanding the Feynman path integral, from the Brownian-like to the L\'{e}vy-like quantum mechanical
paths. In their celebrated paper, Caffarelli-Silvestre \cite{Caffarelli} transform the nonlocal operator $(-\Delta)^\alpha$ to a Dirichlet-Neumann boundary value problem
for a certain elliptic problem with local differential operators defined on the upper half space.
This technique of Caffarelli-Silvestre is a valid tool to deal with the equations involving fractional operators in the respects of regularity and variational methods, please see \cite{Alves2,He} and their references for example.
When the conditions $(V_1)-(V_3)$ are satisfied, L. Yang and Z. Liu \cite{Liu3} proved the multiplicity and concentration of solutions for the following fractional Schr\"{o}dinger equation
\[
 (-\Delta)^s u+ \la V(x)u=f(x,u)+\alpha(x) |u|^{v-2}u, \ \   x\in\R^N,
\]
involving a $k$-order asymptotically linear term $f(x,u)$, where $s\in(0,1)$, $2s<N$, $1\leq k<2_s^*-1=\frac{N+2s}{N-2s}$ and $\alpha\in L^{\frac{v}{2-v}}(\R^N)$ with $1<v<2$.
Please see \cite{Ambrosio1,Ambrosio2,Molica,Felmer,Figueiredo} and their references for
some other related results on fractional Schr\"{o}dinger equation.

However similar results on the fractional Schr\"{o}dinger-Poisson systems are not as rich as the  Schr\"{o}dinger-Poisson system \eqref{introduction2}, especially there are very few results on the existence and concentration results with steep potential well. Very recently, K. Teng and R. Agarwal \cite{Teng3} considered the semiclassic case for the following fractional Schr\"{o}dinger-Poisson system
\[
    \left\{%
\begin{array}{ll}
    \epsilon^{2s}(-\Delta)^s u+  V(x)u+\phi u= K(x)f(u)+Q(x)|u|^{2_s^*-2}u, & x\in\R^3, \\
    \epsilon^{2t}(-\Delta)^t \phi=u^2, &  x\in\R^3,\\
\end{array}%
\right.
\]
under some appropriate conditions on $K(x)$, $Q(x)$ and $f\in C^1(\R^3)$ behaving like $|u|^{p-2}u$ with $4<p<2_s^*=\frac{6}{3-2s}$, where the existence and concentration of positive ground state solutions were obtained.
Other interesting results on fractional Schr\"{o}dinger-Poisson system can be found in \cite{Liu2,Murcia,Shen3,Teng2,Wei,Zhang2} and their references.

Motivated by all the works just described above, particularly by \cite{Zhao}, we prefer to investigate the existence and concentration results for \eqref{mainequation1} with steep potential well and more general nonlinearity. Since we are interested in {positive} solutions, without loss of generality, we assume that $f\in C^0(\R,\R)$ vanishes in $(-\infty,0)$ and satisfies the following conditions:
\vskip0.3cm
\begin{enumerate}[$(f_1)$]
\item \emph{$f\in C^0(\R, \R^+)$ and $f(z) =o(z)$ as $z \to 0$, where $\R^+=[0,+\infty)$;}
\end{enumerate}
 \vskip0.3cm
\begin{enumerate}[$(f_2)$]
 \item  \emph{$|f(z)|\leq C_0(1+|z|^{q-1})$ for some constants $C_0>0$ and $2<q<2^*_s=\frac{6}{3-2s}$;}
\end{enumerate}
\vskip0.3cm
\begin{enumerate}[$(f_3)$]
 \item  \emph{there exist a constant $\gamma>\frac{4s+2t}{s+t}$ such that $zf(z)-\gamma F(z)\geq 0$, where $F(z)=\int_{0}^{z}f(s)ds$.}
\end{enumerate}

Our main results are as follows:

\begin{theorem}\label{maintheorem1}
Let $s,t\in (0,1)$ satisfy $4s+2t>3$,
and assume that $(V_1)-(V_3)$, $(f_1)-(f_3)$, $K(x)\geq 0$ for all $x\in\R^3$ with $K(x)\in L^{\infty}(\R^3)\cap L^{\frac{6}{4s+2t-3}}(\R^3)$ with $s\geq t$. In addition, we assume the following conditions:
\begin{enumerate}[$(V_4)$]
  \item \emph{$V(x)$ is weakly differentiable and $(x,\nabla V)\in L^{\infty}(\R^3)\cup L^{\frac{3}{2s}}(\R^3)$ verifies the following inequality:
 \[
(s+t)(\gamma-2)V(x)+(x,\nabla V)\geq 0,
\]
where $(\cdot,\cdot)$ is the usual inner product in $\R^3$.}
\end{enumerate}
\vskip3mm
\begin{enumerate}[$(K)$]
  \item \emph{$K(x)$ is weakly differentiable and $(x,\nabla K)\in L^{\infty}(\R^3)\cup L^{\frac{6}{4s+2t-3}}(\R^3)$ satisfies the following inequality:
\[
\big[(s+t)\gamma-(4s+2t)\big]K(x)+2(x,\nabla K)\geq 0.
\]
}
\end{enumerate}
Then there exists $\Lambda>0$ such that the system \eqref{mainequation1} admits at least one positive ground state solution for all $\la>\Lambda$.
\end{theorem}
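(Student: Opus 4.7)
The plan is to reduce \eqref{mainequation1} to a single nonlocal equation in $u$, minimize the resulting energy on a Nehari--Poho\v{z}aev type manifold, and use the steep well to recover compactness for $\la$ large. First I would solve the second equation via the Riesz potential: since $4s+2t>3$ and $K\in L^{\infty}\cap L^{6/(4s+2t-3)}$, for every $u\in H^s(\R^3)$ there is a unique $\phi_u^t\in\D^{t,2}(\R^3)$, and the coupling integral $\int_{\R^3}K\phi_u^t u^2$ is finite by Hardy--Littlewood--Sobolev. Working in $E_\la:=\{u\in H^s(\R^3):\int_{\R^3}\la V u^2<\infty\}$ with the natural norm $\|u\|_\la$, the relevant functional is
\[
I_\la(u)=\tfrac12\|u\|_\la^2+\tfrac14\int_{\R^3}K(x)\phi_u^t u^2\wrt x-\int_{\R^3}F(u)\wrt x,
\]
whose mountain-pass geometry follows from $(f_1)$--$(f_2)$ near the origin and $(f_3)$ plus a suitable scaling at infinity.

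Because the map $z\mapsto f(z)/z^3$ is not assumed monotone, the classical Nehari manifold is useless. Instead I would derive a Poho\v{z}aev-type identity via the scaling $u_\tau(x)=u(x/\tau)$, exploiting the weak differentiability of $V$ and $K$, and introduce
\[
\mc M_\la:=\{u\in E_\la\setminus\{0\}:J_\la(u)=0\},\qquad J_\la(u):=(s+t)\langle I_\la'(u),u\rangle+P_\la(u),
\]
where $P_\la$ is the Poho\v{z}aev functional. The precise coefficients are tuned so that the signed terms appearing in $J_\la(u_\tau)$ match exactly the nonnegativity hypotheses $(s+t)(\gamma-2)V+(x,\nabla V)\ge 0$ in $(V_4)$ and $\bigl[(s+t)\gamma-(4s+2t)\bigr]K+2(x,\nabla K)\ge 0$ in $(K)$. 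This should produce strict monotonicity of $\tau\mapsto J_\la(u_\tau)$ and strict concavity of $\tau\mapsto I_\la(u_\tau)$ near the unique critical point $\tau(u)>0$, showing that $\mc M_\la$ is a natural $C^1$ constraint, that every nontrivial critical point of $I_\la$ lies in $\mc M_\la$, and that $m_\la:=\inf_{\mc M_\la}I_\la$ coincides with the mountain-pass level $c_\la$.

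Next I would build a Cerami sequence $\{u_n\}$ with $I_\la(u_n)\to c_\la$ and $(1+\|u_n\|_\la)\|I_\la'(u_n)\|\to 0$. Its boundedness should come from the combination $I_\la(u_n)-\frac{1}{(s+t)\gamma}J_\la(u_n)$, which under $(f_3)$ and the sharp restriction $\gamma>\tfrac{4s+2t}{s+t}$ controls $\|u_n\|_\la^2$. The heart of the argument is the compactness lemma: there exists $\Lambda>0$ such that for $\la>\Lambda$ any bounded PS-type sequence admits a strongly convergent subsequence. This is the standard steep-well estimate; if $u_n\rightharpoonup u$, then the finite measure of $\{V<c\}$ plus the coercive tail $\la V\ge \la c$ outside it force $u_n\to u$ in $L^p(\R^3)$ for $p\in[2,2_s^*)$ once $\la$ is large, while the nonlocal term is handled by Hardy--Littlewood--Sobolev. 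The limit $u_\la$ is a nontrivial critical point with $I_\la(u_\la)=c_\la=m_\la$. Positivity follows by testing with $u_\la^-$ (using that $f$ vanishes on $(-\infty,0)$) combined with the fractional strong maximum principle, and the ground-state property is built into the construction.

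The main obstacle is the middle step: without monotonicity on $f$ one must verify that the Nehari--Poho\v{z}aev functional $J_\la$ really gives a $C^1$ natural constraint and that the infimum on it equals the mountain-pass level. This requires (i) a rigorous Poho\v{z}aev identity for critical points of $I_\la$ in the fractional setting, where integration-by-parts is subtle and the terms $(x,\nabla V)$ and $(x,\nabla K)$ must be controlled exactly by the integrability in $(V_4)$ and $(K)$; and (ii) showing that the coefficients $(s+t)$ and $1$ in $J_\la$ together with the structural inequalities in $(V_4)$, $(K)$ make the fibering map $\tau\mapsto I_\la(u_\tau)$ admit a unique maximum, which is where the specific constants $(s+t)(\gamma-2)$ and $(s+t)\gamma-(4s+2t)$ in the hypotheses originate.
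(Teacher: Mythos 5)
Your overall architecture (reduction via the $t$-Riesz potential, the energy $I_\la$, the role of $(V_4)$ and $(K)$ in making the Poho\v{z}aev combination nonnegative, and the steep-well compactness for $\la$ large) matches the paper. But the middle of your argument has a genuine gap, and it is precisely the one you flag without resolving. You propose to minimize on the Nehari--Poho\v{z}aev manifold $\mc M_\la=\{J_\la=0\}$ and to deduce that the fibering map $\tau\mapsto I_\la(u_\tau)$ has a unique maximum, that $\mc M_\la$ is a natural $C^1$ constraint, and that $\inf_{\mc M_\la}I_\la$ equals the mountain-pass level. Under the hypotheses actually available --- $(f_1)$--$(f_3)$ with no monotonicity of $z\mapsto f(z)/z^3$ and only the one-sided inequality $zf(z)-\gamma F(z)\ge 0$ --- none of these three claims can be established: the fibering map can have several critical points, so $\tau(u)$ need not be unique or continuous, and the manifold need not be a natural constraint. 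The paper's whole point (stated in the abstract and introduction) is to avoid exactly this; it instead uses Jeanjean's monotonicity trick, introducing the family $I_{\la,\mu}=A-\mu B$, $\mu\in[\delta,1]$, obtaining for a.e.\ $\mu$ a \emph{bounded} $(PS)_{c_{\la,\mu}}$ sequence from the abstract theorem, passing to exact critical points $u(\mu_n)$ via the $(PS)$ condition, and only then invoking the Poho\v{z}aev identity.

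A second, related gap is your boundedness argument for the Cerami sequence via $I_\la(u_n)-\frac{1}{(s+t)\gamma}J_\la(u_n)$. The functional $J_\la$ contains the Poho\v{z}aev expression $P_\la(u_n)$, and for a sequence that is only approximately critical there is no reason for $P_\la(u_n)$ to be small or even controlled: the Poho\v{z}aev identity (Lemma 3.1 of the paper) holds for exact critical points, not for $(PS)$ or Cerami sequences. This is why the paper applies the combination $I_{\la,\mu_n}(u_n)-\frac{1}{(s+t)\gamma-3}\bigl[(s+t)\langle I'_{\la,\mu_n}(u_n),u_n\rangle-P_{\la,\mu_n}(u_n)\bigr]$ only to the exact critical points produced at parameters $\mu_n\to 1^-$, where both bracketed terms vanish and the left-over terms are nonnegative thanks to $(V_4)$, $(K)$ and $(f_3)$; this yields the uniform bound on $|(-\Delta)^{s/2}u_n|_2$ and then on $\|u_n\|_\la$ via interpolation. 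To make your proof work you would either have to add a monotonicity hypothesis (which the theorem deliberately omits) or replace the manifold-minimization step with the approximation-in-$\mu$ scheme.
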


\begin{remark}
There are some remarks on Theorem \ref{maintheorem1} as follows:
\begin{enumerate}[$(1)$]
  \item  The hypothesi $K(x)\in L^{\frac{6}{4s+2t-3}}(\R^3)$ with $s\geq t$ is unnecessary if we restrict the work spaces to radially symmetric spaces, such as $H_r^s(\R^3)=\big\{u\in H^s(\R^3):u(x)=u(|x|)\big\}$. In other words if the work spaces are radially symmetric, we may have $\gamma\leq3$ which is an interesting phenomenon, where the positive constant $\gamma$ comes from $(f_3)$.
\end{enumerate}
\begin{enumerate}[$(2)$]
  \item  Compared with the conditions $(\widetilde{{V_4})}-\widetilde{({K})}$ in \cite{Zhao} and $(V_4)-(K)$ in our paper, we have to make a carefully analysis to the fractional Schr\"{o}dinger-Poisson system involving a more general nonlinearity. On the other hand, we always assume $q\in(2,2_s^*)$ in $(f_2)$, hence the assumptions $(V_4)-(K)$ are never redundant.
\end{enumerate}
\begin{enumerate}[$(3)$]
  \item  It should pointed out here that the above nonlinearity assumptions $(f_1)-(f_3)$ mainly were motivated by J. Sun and S. Ma \cite{Sun}. Compared with \cite{Sun}, some appropriate modifications were made to adapt the fractional Schr\"{o}dinger-Poisson system.
\end{enumerate}
\begin{enumerate}[$(4)$]
  \item  A typical example of the nonlinearity verifying the assumptions $(f_1)-(f_3)$ is given by $f (z)=|z|^{\gamma-2}z$ with $\gamma>\frac{4s+2t}{s+t}$.
\end{enumerate}
\end{remark}

\begin{remark}
Recently, K. Teng \cite{Teng2} and Shen-Yao \cite{Shen3} have considered the existence of ground state solutions to the following fractional Schr\"{o}dinger-Poisson system:
$$
  \left\{%
\begin{array}{ll}
    (-\Delta)^s u+ V(x)u+\phi u= |u|^{p-2}u+\mu |u|^{2_s^*-2}u, & x\in\R^3, \\
    (-\Delta)^t \phi=u^2, &  x\in\R^3,\\
\end{array}%
\right.
$$
with $\mu\ge0$ and $2<p<2_s^*$ under suitable assumptions of $V(x)$. The two papers above were required to meet condition $2s+2t>3$,  which is more restricted than the condition $4s+2t>3$ in this paper if $f(u)$ behaves like $|u|^{q-2}u$ with $2<q<2_s^*$. In fact, we remark that by  the techniques here, the condition $2s+2t>3$ can be improved to the inequality $4s+2t>3$.
\end{remark}

Inspired by the results in \cite{Bartsch,Du,Jiang,Liu3,Zhao}, we get the following concentration result:

\begin{theorem}\label{maintheorem2}
Let $(u_\la,\phi_{u_\la})$ be the nontrivial solutions obtained in Theorem \ref{maintheorem1}, then $u_\la \to u_0$ in $H^s(\R^3)$ (see Section 2 below) and $\phi_{u_\la} \to \phi_{u_0}$ in $D^{t,2}(\R^3)$ (see Section 2 below) as $\la\to+\infty$, where $u_0\in H^s_0(\Omega)$ is a nontrivial solution to
 \begin{equation}\label{mainequation2}
    \left\{%
\begin{array}{ll}
    (-\Delta)^s u+c_t\bigg(\big(K(x)u^2\big)*\frac{1}{|x|^{3-2t}}\bigg) K(x)\phi u= f(u), & x\in \Omega, \\
    u=0, &  \text{on}\ \  \partial \Omega.\\
\end{array}%
\right.
\end{equation}
Note that $c_t>0$ is a constant form \eqref{transform2} below.
\end{theorem}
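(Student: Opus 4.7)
The plan is to run the standard steep-potential-well compactness argument, adapted to the fractional Schr\"odinger--Poisson coupling. I would proceed in four steps: (a) uniform bounds on $(u_\la,\phi_{u_\la})$ as $\la\to+\infty$; (b) identification of the weak limit $u_0\in H^s_0(\Om)$; (c) passage to the limit in the equation to obtain \eqref{mainequation2}; (d) upgrade of weak to strong convergence, together with $u_0\not\equiv 0$.

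For step (a), I fix any nonzero $w\in C_c^\infty(\Om)$; since $V\equiv 0$ on $\Om$, the associated Nehari--Poho\v{z}aev value is $\la$-independent, yielding $\sup_{\la>\Lambda}c_\la<\infty$. The coercivity established in the proof of Theorem \ref{maintheorem1} (which rests on $(f_3)$) then gives
\[
\|u_\la\|_{H^s(\R^3)}^2+\la\int_{\R^3}V(x)u_\la^2\wrt x\le C,
\]
while the Hardy--Littlewood--Sobolev (HLS) inequality and $K\in L^{6/(4s+2t-3)}(\R^3)$ yield $\|\phi_{u_\la}\|_{\D^{t,2}(\R^3)}\le C$. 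Up to a subsequence, $u_{\la_n}\rightharpoonup u_0$ in $H^s(\R^3)$ and $u_{\la_n}\to u_0$ a.e.\ and in $L^r_{\mathrm{loc}}$ for $r\in[1,2^*_s)$.

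For step (b), $\int_{\R^3}\la V u_\la^2\wrt x\le C$ together with Fatou's lemma forces $\int_{\R^3}V u_0^2\wrt x=0$, so $u_0=0$ a.e.\ on $\{V>0\}=\R^3\setminus\overline{\Om}$; the smoothness of $\pl\Om$ in $(V_3)$ then places $u_0\in H^s_0(\Om)$. For step (c), HLS gives $Ku_{\la_n}^2\to K u_0^2$ in the dual of $\D^{t,2}(\R^3)$, hence $\phi_{u_{\la_n}}\rightharpoonup\phi_{u_0}$ in $\D^{t,2}(\R^3)$; testing the first equation of \eqref{mainequation1} against $\varphi\in C_c^\infty(\Om)$ and using the Rellich--Kondrachov compactness $H^s\hookrightarrow L^q_{\mathrm{loc}}$ shows that $u_0$ weakly solves \eqref{mainequation2}.

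Step (d) is the heart of the proof: I test the equation for $u_{\la_n}-u_0$ (with $u_0$ extended by zero) against $u_{\la_n}-u_0$. The nonlocal term is controlled by HLS combined with $\phi_{u_{\la_n}}\rightharpoonup\phi_{u_0}$; the nonlinear term vanishes in the limit via $(f_1)$--$(f_2)$ and local compactness; the penalty $\la\int V u_{\la_n}^2\ge 0$ is absorbed using $(V_2)$. This gives $\|u_{\la_n}-u_0\|_{H^s(\R^3)}\to 0$, and continuity of the Poisson map transfers the convergence to $\phi_{u_{\la_n}}\to\phi_{u_0}$ in $\D^{t,2}(\R^3)$. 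Nontriviality $u_0\not\equiv 0$ follows from a standard Nehari-type lower bound $\|u_{\la_n}\|_{H^s}\ge\delta>0$ deduced from $(f_1)$--$(f_2)$. The main obstacle is precisely the tail control in (d): on the annulus $\{V<c\}\setminus\overline{\Om}$ the weight $V$ may be small, so the penalty term does not immediately dominate. Assumption $(V_2)$ is the decisive ingredient, since the finite Lebesgue measure of $\{V<c\}$ allows one to bound $\|u_{\la_n}\|_{L^2(\{V<c\}\setminus\overline{\Om})}$ via H\"older by $\|u_{\la_n}\|_{L^{2^*_s}}$ (hence by $\|u_{\la_n}\|_{H^s}$), while on $\{V\ge c\}$ the factor $\la V$ forces the $L^2$-mass to vanish as $\la_n\to+\infty$; merging these two estimates with a Brezis--Lieb splitting closes the argument.
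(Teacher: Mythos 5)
Your steps (a)--(c) follow the same route as the paper: the $\la$-independent mountain-pass bound $c_{\la,\mu}\leq M_0$ obtained from a fixed test function supported in $\Om$ (Lemma \ref{2Mountpass}(c)), boundedness of $\|u_{\la}\|_{\la}$ via the Poho\v{z}aev-type estimate, Fatou's lemma to force $u_0=0$ on $\{V>0\}$ and hence $u_0\in H_0^s(\Omega)$, and passage to the limit in the weak formulation using Lemma \ref{aaaaa}(c) and \eqref{weak2}. The nontriviality argument at the end is also the paper's.

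The gap is in step (d), precisely at the point you yourself identify as the heart of the matter. You claim that the finite measure of $\{V<c\}$ lets you control $\|u_{\la_n}\|_{L^2(\{V<c\}\setminus\overline{\Om})}$ ``via H\"older by $\|u_{\la_n}\|_{L^{2^*_s}}$'': but H\"older over the \emph{whole} finite-measure set only yields $\int_{\{V<c\}}|u_n-u_0|^2dx\leq |\{V<c\}|^{2s/3}\,|u_n-u_0|_{2^*_s}^2\leq C$, i.e.\ boundedness, not decay, and no Br\'ezis--Lieb splitting can convert a bounded term into a vanishing one. Since $V$ may be arbitrarily small on $\{0<V<c\}$, the penalty $\la_n\int V u_n^2$ gives nothing there, so your two estimates do not merge into $\|u_{\la_n}-u_0\|_{L^q}\to 0$. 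What is missing is a localization: either (i) the paper's route, which assumes $u_n\not\to u_0$ in $L^q$, invokes the vanishing lemma (Lemma \ref{Vanishing}) to produce balls $B_\rho(y_n)$ with $|y_n|\to\infty$ carrying $L^2$-mass $\geq\delta_0$, and only \emph{then} uses $|\{V<c\}|<\infty$ to get $|B_\rho(y_n)\cap\{V<c\}|\to 0$ --- so that H\"older gives genuine smallness on that piece and the remaining mass must sit on $\{V\geq c\}$, where $\|u_n\|_{\la_n}^2\geq \la_n c\,\delta_0/2\to\infty$ contradicts boundedness; or (ii) a direct splitting $\{V<c\}=(\{V<c\}\cap B_R)\cup(\{V<c\}\setminus B_R)$, using local compact embedding on the first piece and $|\{V<c\}\setminus B_R|\to 0$ as $R\to\infty$ on the second. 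Either fix is short, but without one of them your step (d) does not close; the rest of the step (the nonlocal term via the $A_2$-estimate from \eqref{weak1}, and the transfer to $\phi_{u_{\la_n}}\to\phi_{u_0}$ in $D^{t,2}(\R^3)$) is fine once $L^q$-convergence is in hand.
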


Now we give our main ideas for the proofs of Theorem \ref{maintheorem1} and \ref{maintheorem2}. It is not simple to verify that $I_\la$ (see Section 2) possesses a Mountain-pass geometry in the usual way because the Ambrosetti-Rabinowitz
type condition ($(AR)$ in short):
\begin{enumerate}[$(AR)$]
\item \ \ \ \ \ \ \ \   There exists $\eta>4$ such that $0 < \eta F(t)\leq f(t)t$ for all $t \neq 0$
\end{enumerate}
or 4-superlinear at infinity in the sense that
\[
(F)\ \  \ \ \ \ \ \ \ \ \ \ \ \ \ \   \ \  \ \ \ \ \ \ \ \ \ \ \ \ \ \
\lim\limits_{|t|\to \infty}\frac{F(t)}{|t|^4}=+\infty.\ \  \ \ \ \ \ \ \ \ \ \ \ \ \ \ \ \  \ \ \ \ \ \ \ \ \ \ \ \ \ \ \ \  \ \ \ \ \ \ \ \ \ \ \ \ \ \ \ \  \ \ \ \ \ \ \ \ \ \ \ \ \ \   \ \  \ \ \ \ \ \ \ \ \ \ \ \ \ \    \ \  \ \ \ \ \ \ \ \ \ \ \ \ \ \
\]
does not always hold.
Furthermore, even
if a $(PS)$ sequence has been obtained, it is difficult to prove its boundedness since
the nonlinearity $f(u)$ behaving like $|u|^{q-2}u$ with $2<q<2^*_s$ results in
neither the weaker condition $(AR)_4$ ($\eta=4$ in $(AR)$) nor the condition
\begin{enumerate}[$(M)$]
  \item The map $t \to
\frac{f(t)}{t^3}$ is positive for $t\neq 0$, strictly decreasing on $(-\infty,0)$ and
strictly increasing on $(0,+\infty)$.
\end{enumerate}
works yet. To overcome this difficulties, motivated by \cite{Zhao2}, we use an indirect approach (see Proposition \ref{proposition}) developed by L. Jeanjean \cite{Jeanjean3} to get a bounded $(PS)$ sequence.
Though a bounded $(PS)$ sequence can be constructed, another difficulty on the lack of compactness of the Sobolev embedding $H^s(\R^3)\hookrightarrow L^r(\R^3)$ with $2\leq r\leq 2_s^*$ occurs and the $(PS)$ condition seems to be hard to verify because we do not assume the potential $V(x)$ and the weight function $K(x)$ to be radially symmetric. To solve it, we assume $K(x)\in L^{\frac{6}{4s+2t-3}}(\R^3)$ with $s\geq t$ to recover the compactness and then to prove the $(PS)$ condition. So far, we can prove the Theorem \ref{maintheorem1} and \ref{maintheorem2} step by step.

The paper is organized as follows. In Section 2, the function spaces will be introduced and then we provide several lemmas, which are crucial in proving
our main results. In Section 3, the proof of Theorem \ref{maintheorem1} is obtained. The concentration result of Theorem \ref{maintheorem2} will be proved in Section 4.
\\\\
\textbf{Notations.} Throughout this paper we shall denote by $C$ and $C_i$ ($i=1, 2,\cdots$) for various positive constants whose exact value may change from lines to lines but are not essential to the analysis of problem. $L^p(\R^3)$ $(1\leq p\leq+\infty)$ is the usual Lebesgue space with the standard norm $|u|_p$.
We use $``\to"$ and $``\rightharpoonup"$ to denote the strong and weak convergence in the related function space, respectively. The symbol $``\hookrightarrow"$ means a function space is continuously imbedding into another function space. The Lebesgue measure of a Lebesgue measurable set $E$ in $\R^3$ is $|E|$.
For any $\rho>0$ and any $x\in \R^3$, $B_\rho(x)$ denotes the ball of radius $\rho$ centered at $x$, that is, $B_\rho(x):=\{y\in \R^3:|y-x|<\rho\}$.

Let $(X,\|\cdot\|)$ be a Banach space with its dual space $(X^{-1},\|\cdot\|_{*})$, and $\Phi$ be its functional on $X$. The Palais-Smale sequence at level $c\in\R$ ($(PS)_c$ sequence in short) corresponding to $\Phi$ assumes that $\Phi(x_n)\to c$ and $\Phi^{\prime}(x_n)\to 0$ as $n\to\infty$, where $\{x_n\}\subset X$. If for any $(PS)_c$ sequence $\{x_n\}$ in $X$, there exists a subsequence $\{x_{n_{k}}\}$ such that $x_{n_{k}}\to x_0$ in $X$ for some $x_0\in X$, then we say that the functional $\Phi$ satisfies the so called $(PS)_c$ condition.
\section{Variational settings and preliminaries}
In this section, we first bring in some necessary variational settings for system \eqref{mainequation1} and the complete
introduction to the fractional Sobolev spaces can be found in \cite{Nezza}. Recalling that the fractional Sobolev space $W^{\alpha,p}(\R^N)$ is defined for any
$p\in[1, +\infty)$ and $\alpha\in(0, 1)$ as follows
\[
W^{\alpha,p}(\R^N)=\bigg\{u\in L^p(\R^N):\int_{\R^N}\int_{\R^N}\frac{|u(x)-u(y)|^p}{|x-y|^{N+\alpha p}}dxdy<+\infty\bigg\}
\]
equipped with the natural norm
\[
\|u\|_{W^{\alpha,p}(\R^N)}=\bigg(\int_{\R^N}\int_{\R^N}\frac{|u(x)-u(y)|^p}{|x-y|^{N+\alpha p}}dxdy+\int_{\R^N}|u|^pdx\bigg)^{\frac{1}{p}}.
\]
In particular, if $p=2$, the fractional Sobolev space $W^{\alpha,2}(\R^N)$ is simply denoted by $H^\alpha(\R^N)$. As we all know, the fractional Sobolev space $H^{\alpha}(\R^N)$ can be also described by the Fourier transform, that is,
\[
H^{\alpha}(\R^N)=\bigg\{u\in L^2(\R^N):\int_{\R^N}|\xi|^{2\alpha}|\widehat{u}(\xi)|^2+|\widehat{u}(\xi)|^2d\xi<+\infty \bigg\},
\]
where $\hat{u}$ denotes the usual Fourier transform of $u$. When we take the definition of the fractional Sobolev space $H^{\alpha}(\R^N)$ by the Fourier transform, the inner product and the norm for $H^{\alpha}(\R^N)$ are defined as
\[
(u.v)_0=\int_{\R^N}|\xi|^{2\alpha}\widehat{u}(\xi)\widehat{v}(\xi)+\widehat{u}(\xi)\widehat{v}(\xi)d\xi
\]
and
\[
\|u\|_{H^{\alpha}(\R^N)}=\bigg(\int_{\R^N}|\xi|^{2\alpha}|\widehat{u}(\xi)|^2+|\widehat{u}(\xi)|^2d\xi\bigg)^{\frac{1}{2}}.
\]
Following from Plancherel's theorem, one has $|u|_2=|\widehat{u}|_2$ and $|(-\Delta)^{\frac{\alpha}{2}}u|_2=||\xi|^\alpha\widehat{u}|_2$. Hence
\begin{equation}\label{definition}
  \|u\|_{H^{\alpha}(\R^N)}=\bigg(\int_{\R^N}|(-\Delta)^{\frac{\alpha}{2}}u|^2+|u|^2dx\bigg)^{\frac{1}{2}},\ \ \forall~u\in H^{\alpha}(\R^N).
\end{equation}
As a consequence of \cite[Proposition 3.4 and Proposition 3.6]{Nezza}, one has
\[
 |(-\Delta)^{\frac{\alpha}{2}}u|_2=\bigg(\int_{\R^N}|\xi|^{2\alpha}|\widehat{u}(\xi)|^2d\xi\bigg)^{\frac{1}{2}}
=\bigg(\frac{1}{C_N(\alpha)}\int_{\R^N}\int_{\R^N}\frac{|u(x)-u(y)|^2}{|x-y|^{N+2\alpha}}dxdy\bigg)^{\frac{1}{2}}.
\]
which reveals that the norm given by \eqref{definition} makes sense for the fractional Sobolev space. Meanwhile
the homogeneous fractional Sobolev space $D^{\alpha,2}(\R^N)$ is defined by
\[
D^{\alpha,2}(\R^N)=\bigg\{ u\in L^{2^*_\alpha}(\R^N): |\xi|^\alpha \widehat{u}(\xi)\in L^{2^*_\alpha}(\R^N)  \bigg\}
\ \  \text{with}\ \ 2^*_\alpha=\frac{2N}{N-2\alpha}\ \ \text{and} \ \  N\geq 3.
\]
which is the completion of $C^{\infty}_0(\R^N)$ under the norm
\[
\|u\|_{D^{\alpha,2}(\R^N)}=\bigg(\int_{\R^N}|(-\Delta)^{\frac{\alpha}{2}}u|^2dx\bigg)^{\frac{1}{2}}=
\bigg(\int_{\R^N}|\xi|^{2\alpha}|\widehat{u}(\xi)|^2d\xi\bigg)^{\frac{1}{2}}.
\]

The following fractional Sobolev embedding theorems are necessary.
\begin{lemma}\label{imbedding}
 (see \cite{P. L. Lions}) For any $\alpha\in (0,\frac{N}{2})$, $H^\alpha(\R^N)$ is continuously embedded into $L^r(\R^N)$ for $r\in[2,2^*_\alpha]$ and compactly embedded into $L^r_{\text{loc}}(\R^N)$ for $r\in [1, 2^*_\alpha)$.
\end{lemma}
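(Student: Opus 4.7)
The plan is to prove the two parts (continuous global embedding and compact local embedding) separately, using tools from Fourier analysis together with the Fréchet–Kolmogorov compactness criterion, all of which are classical for $H^\alpha(\R^N)$ with $\alpha\in(0,N/2)$.

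\textbf{Step 1 (Continuous embedding).} First I would establish the critical embedding $H^\alpha(\R^N)\hookrightarrow L^{2_\alpha^*}(\R^N)$. Using the Fourier characterization given just above the statement, one has $\|(-\Delta)^{\alpha/2}u\|_{L^2}=\||\xi|^\alpha\widehat u\|_{L^2}$, so $(-\Delta)^{\alpha/2}u\in L^2(\R^N)$ whenever $u\in H^\alpha$. Writing $u=I_\alpha\bigl((-\Delta)^{\alpha/2}u\bigr)$ via the Riesz potential $I_\alpha f(x)=c_{N,\alpha}\int f(y)|x-y|^{-(N-2\alpha)}\wrt y$, the Hardy–Littlewood–Sobolev inequality yields
\[
\|u\|_{L^{2_\alpha^*}(\R^N)}\le C\,\|(-\Delta)^{\alpha/2}u\|_{L^2(\R^N)}\le C\,\|u\|_{H^\alpha(\R^N)}.
\]
Combined with the trivial inclusion $H^\alpha(\R^N)\hookrightarrow L^2(\R^N)$, the classical interpolation inequality $\|u\|_{L^r}\le\|u\|_{L^2}^{\theta}\|u\|_{L^{2_\alpha^*}}^{1-\theta}$ with $\theta\in[0,1]$ chosen so that $\frac1r=\frac{\theta}{2}+\frac{1-\theta}{2_\alpha^*}$ gives the continuous embedding $H^\alpha(\R^N)\hookrightarrow L^r(\R^N)$ for every $r\in[2,2_\alpha^*]$.

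\textbf{Step 2 (Local compactness).} Fix any bounded Lebesgue measurable set $\Omega\subset\R^N$ and a bounded sequence $\{u_n\}\subset H^\alpha(\R^N)$. By the Fréchet–Kolmogorov theorem it suffices to show (i) $\{u_n|_\Omega\}$ is bounded in $L^2(\Omega)$, which is immediate from Step 1, and (ii) $\lim_{|h|\to 0}\sup_n\|u_n(\cdot+h)-u_n\|_{L^2(\Omega)}=0$. For (ii), Plancherel gives
\[
\|u_n(\cdot+h)-u_n\|_{L^2(\R^N)}^2=\int_{\R^N}|e^{ih\cdot\xi}-1|^2|\widehat u_n(\xi)|^2\wrt\xi.
\]
Splitting $\R^N$ into $\{|\xi|\le R\}$ and $\{|\xi|>R\}$ and using $|e^{ih\cdot\xi}-1|^2\le\min(4,|h|^2|\xi|^2)$, together with $\int|\xi|^{2\alpha}|\widehat u_n|^2\wrt\xi\le\|u_n\|_{H^\alpha}^2\le C$, one gets
\[
\|u_n(\cdot+h)-u_n\|_{L^2(\R^N)}^2\le|h|^2R^{2-2\alpha}C+4R^{-2\alpha}C,
\]
and optimizing in $R$ produces a bound $\le C'|h|^{2\alpha}$ uniform in $n$. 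This establishes equicontinuity in mean and hence precompactness in $L^2(\Omega)$. To upgrade to $L^r_{\mathrm{loc}}(\R^N)$ for every $r\in[1,2_\alpha^*)$, I would combine the obtained $L^2_{\mathrm{loc}}$ convergence (after passing to a subsequence) with the uniform $L^{2_\alpha^*}$ bound from Step 1 via interpolation $\|u_n-u_m\|_{L^r(\Omega)}\le\|u_n-u_m\|_{L^2(\Omega)}^{\theta}\|u_n-u_m\|_{L^{2_\alpha^*}(\Omega)}^{1-\theta}$ with appropriate $\theta\in(0,1]$, which forces Cauchyness in $L^r(\Omega)$ whenever $r<2_\alpha^*$. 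For $r\in[1,2)$, Hölder's inequality on the bounded set $\Omega$ reduces to the $L^2$ case.

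\textbf{Main obstacle.} The only nontrivial ingredient is the critical Sobolev inequality $\|u\|_{L^{2_\alpha^*}}\le C\|(-\Delta)^{\alpha/2}u\|_{L^2}$, which rests on the Hardy–Littlewood–Sobolev inequality for the Riesz potential; the loss of compactness at the endpoint $r=2_\alpha^*$ is intrinsic (concentration and translation invariance), which is precisely why the statement excludes $r=2_\alpha^*$ from the compact embedding. Everything else is a standard application of interpolation and Fréchet–Kolmogorov.
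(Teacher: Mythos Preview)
Your proof is correct and follows the standard route for fractional Sobolev embeddings. Note, however, that the paper does not actually supply a proof of this lemma: it is stated with the attribution ``(see \cite{P. L. Lions})'' and used as a black box, so there is no argument in the paper to compare yours against. What you have written is precisely the classical argument one would give to justify the citation.

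One small technical remark: in Step~2 your low-frequency estimate $\int_{|\xi|\le R}|h|^2|\xi|^2|\widehat u_n|^2\,d\xi\le |h|^2R^{2-2\alpha}C$ uses $|\xi|^{2-2\alpha}\le R^{2-2\alpha}$ on $\{|\xi|\le R\}$, which is only valid when $\alpha\le 1$. Since the lemma is stated for all $\alpha\in(0,N/2)$, you should instead bound the low-frequency piece by $|h|^2R^2\|u_n\|_{L^2}^2$ using the $L^2$ part of the $H^\alpha$ norm; optimizing then gives a uniform bound of order $|h|^{2\alpha/(1+\alpha)}$, which is just as good for equicontinuity. In the context of this paper ($s,t\in(0,1)$) your original estimate is fine, but for the lemma as stated the adjustment is needed.
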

 As a direct consequence of Lemma \ref{imbedding}, there are constants $C_r> 0$ such that
\begin{equation}\label{Sobolev1}
\|u\|_{H^\alpha(\R^N)} \leq C_r|u|_r, \ \  \forall~u\in H^\alpha(\R^N)\ \ \text{and}\ \ 2\leq r\leq 2^{*}_\alpha.
\end{equation}
Also there exists a best constant $S_\alpha>0$ (see \cite{Cotsiolis}) such that
\begin{equation}\label{Sobolev2}
  S_\alpha=\inf_{u\in D^{\alpha,2}(\R^N)\setminus \{0\}}\frac{\int_{\R^N}|(-\Delta)^{\frac{\alpha}{2}}u|^2dx}{\big(\int_{\R^N}|u|^{2^*_\alpha}dx\big)^{\frac{2}{2^*_\alpha}}}.
\end{equation}

In this paper, for $s,t\in(0,1)$ we restrict the work spaces in dimension $N=3$ and let
\[
E\triangleq\bigg\{u\in H^s(\R^3):\int_{\R^3}V(x)u^2dx<+\infty\bigg\}
\]
be endowed with the inner product and the norm
\[
(u,v)= \int_{\R^3}(-\Delta)^{\frac{s}{2}}u(-\Delta)^{\frac{s}{2}}v+V(x)uvdx, \ \ \|u\|=\bigg(\int_{\R^3}|(-\Delta)^{\frac{s}{2}}u|^2+V(x)u^2dx\bigg)^{\frac{1}{2}}
\]
for any $u,v\in E$. By using the assumptions $(V_1)-(V_2)$ and \eqref{Sobolev2}, one has
\begin{align*}
   \int_{\R^3}u^2dx &=\int_{\{V\geq c\}}u^2dx+\int_{\{V<c\}}u^2dx   \\
    & \leq \frac{1}{c}\int_{\{V\geq c\}}V(x)u^2dx+\big|\{V<c\}\big|^{\frac{2_s^*-2}{2_s^*}}\bigg(\int_{\{V<c\}}|u|^{2_s^*}dx\bigg)^{\frac{2}{2_s^*}}\\
    &\leq \max\bigg\{\frac{1}{c},\big|\{V<c\}\big|^{\frac{2_s^*-2}{2_s^*}}\bigg\}\|u\|^2
\end{align*}
which implies that the imbedding $E\hookrightarrow H^s(\R^3)$ is continuous. Thus by \eqref{Sobolev1} there exists $d_r>0$ such that
\begin{equation}\label{Sobolev3}
|u|_r \leq d_r\|u\|, \ \  \forall~u\in E\ \ \text{and}\ \ 2\leq r\leq 2^{*}_s.
\end{equation}
For any $\la>0$, we let $E_\la\triangleq(E,\|\cdot\|_\la)$ and the inner product and norm are
\[
(u,v)_\la= \int_{\R^3}(-\Delta)^{\frac{s}{2}}u(-\Delta)^{\frac{s}{2}}v+\la V(x)uvdx, \ \ \|u\|_\la=\bigg(\int_{\R^3}|(-\Delta)^{\frac{s}{2}}u|^2+\la V(x)|u|^2dx\bigg)^{\frac{1}{2}}.
\]
Obviously, $\|u\|\leq \|u\|_\la$ if $\la\geq 1$. The following facts
\[
\int_{\{V<c\}}|u|^2dx  \leq  \big|\{V<c\}\big|^{\frac{2_s^*-2}{2_s^*}}|u|^2_{ 2_s^*}
   \stackrel{\mathrm{\eqref{Sobolev2}}}{\leq} \big|\{V<c\}\big|^{\frac{2_s^*-2}{2_s^*}}S_s^{-1}\|u\|^2_\la
\]
and
\[
 \int_{\{V\geq c\}}|u|^2dx\leq \frac{1}{\la c}\int_{\{V\geq c\}}\la V(x)|u|^2dx\leq \frac{1}{\la c}\int_{\R^3}\la V(x)|u|^2dx \leq \frac{1}{\la c}\|u\|^2_\la
\]
give us that for any $r\in[2,2_s^*]$
\begin{eqnarray*}
 \int_{\R^3}|u|^rdx &\leq&\bigg(\int_{\R^3}|u|^2dx\bigg)^{\frac{2_s^*-r}{2_s^*-2}}
 \bigg(\int_{\R^3}|u|^{2_s^*}dx\bigg)^{\frac{r-2}{2_s^*-2}}  \\
   &\stackrel{\mathrm{\eqref{Sobolev2}}}{\leq} & \bigg(2\max\Big\{S_s^{-1}\big|\{V<c\}\big|^{\frac{2_s^*-2}{2_s^*}},\frac{1}{\la c}\Big\}\|u\|^2_\la\bigg)^{\frac{2_s^*-r}{2_s^*-2}}
   \bigg(S_s^{-\frac{2_s^*}{2}}\|u\|^{2_s^*}_\la\bigg)^{\frac{r-2}{2_s^*-2}}.
\end{eqnarray*}
Hence for any $r\in[2,2_s^*]$, we have that
\begin{equation}\label{Lr}
 \int_{\R^3}|u|^rdx \leq \bigg(2\big|\{V<c\}\big|\bigg)^{\frac{2_s^*-r}{2_s^*}}
S_s^{-\frac{r}{2}}\|u\|^{r}_\la\ \ \text{whenever} \ \  \la\geq
c^{-1}\big|\{V<c\}\big|^{-\frac{2_s^*-2}{2_s^*}}  S_s.
\end{equation}

It is similar to the usual Sch\"{o}rdinger-Poisson system that the system \eqref{mainequation1} can reduce to be a single equation. Indeed, using the H\"{o}lder inequality,
for every $u\in H^{s}(\R^3)$ and $v\in D^{t,2}(\R^3)$, one has
\begin{eqnarray}\label{transform0}
\nonumber\int_{\R^3}K(x)u^2vdx &\leq& |K|_{\frac{6}{4s+2t-3}} \bigg(\int_{\R^3}|u|^{\frac{6}{3-2s}}dx\bigg)^{\frac{3-2s}{3}}\bigg(\int_{\R^3}|v|^{\frac{6}{3-2t}}dx\bigg)^{\frac{3-2t}{6}}\\
 &\leq&|K|_{\frac{6}{4s+2t-3}}S_s^{-1} S_t^{-\frac{1}{2}}\|u\|^2_{D^{t,2}(\R^3)}\|v\|_{D^{t,2}(\R^3)}
\leq C\|u\|^2 \|v\|_{D^{t,2}(\R^3)},
\end{eqnarray}
where we use the fact that $E\hookrightarrow H^{s}(\R^3)\hookrightarrow L^{2_s^*}(\R^3)$.
For any $u\in H^s(\R^3)$, one can use the Lax-Milgram theorem and then
there exists a unique $\phi_u^t\in D^{t,2}(\R^3)$ such that
\begin{equation}\label{transform1}
 \int_{\R^3}(-\Delta)^{t}\phi_u^tvdx=\int_{\R^3}(-\Delta)^{\frac{t}{2}}\phi_u^t (-\Delta)^{\frac{t}{2}}vdx=\int_{\R^3}K(x)u^2vdx,\ \  \forall~v\in D^{t,2}(\R^3).
\end{equation}
In other words, $\phi_u^t$ satisfies the Poisson equation
$$
(-\Delta)^t\phi_u^t=K(x) u^2,\ \ x\in\R^3
$$
and we can write it an integral expression, that is,
\begin{equation}\label{transform2}
  \phi_u^t(x)=c_t\int_{\R^3}\frac{K(x)u^2(y)}{|x-y|^{3-2t}}dx,\ \ x\in\R^3,
\end{equation}
which is called $t$-Riesz potential, where
\[
c_t=\pi^{-\frac{3}{2}}2^{-{2t}}\frac{\Gamma(\frac{3}{2}-2t)}{\Gamma(t)}.
\]
It follows from \eqref{transform2} that $\phi_u^t(x)\geq 0$ for all $x\in\R^3$. Taking $v=\phi_u^t$ in \eqref{transform0} and \eqref{transform1}, we derive
\begin{equation}\label{transform4}
 \|\phi_u^t\|_{D^{t,2}(\R^3)}\leq C\|u\|^2.
\end{equation}

Substituting \eqref{transform2} into \eqref{mainequation1}, we can rewrite \eqref{mainequation1} in the
following equivalent form
\begin{equation}\label{mainequation5}
(-\Delta)^{s}u+\la V(x)u+K(x)(-\Delta)^{t}\phi_u^tu= f(u),\ \ x\in\R^3.
\end{equation}
The energy functional $I_\la:H^s(\R^3)\to\R$ associated to the problem \eqref{mainequation5} is given by
\begin{equation}\label{functional}
 I_\la(u)=\frac{1}{2}\|u\|^2_\la +\frac{1}{4}\int_{\R^3}K(x)\phi_u^t u^2 dx
  -\int_{\R^3}F(u)dx.
\end{equation}
If we take $v=\phi_u^t$ in \eqref{transform0} and \eqref{transform1} again, we get
\[
\int_{\R^3}K(x)\phi_u^tu^2dx\leq C\|u\|^2 \|\phi_u^t\|_{D^{t,2}(\R^3)}\stackrel{\mathrm{\eqref{transform4}}}{\leq} C\|u\|^4.
\]
It is therefore that $I_\la(u)$ is well-defined and $I_\la\in C^1(E_\la,\R)$ by \eqref{functional} ({see \cite{Willem} for details}), moreover its differential is
\[
\langle I^{\prime}_\la(u),v\rangle= \int_{\R^3}(-\Delta)^{\frac{s}{2}}u (-\Delta)^{\frac{s}{2}}v dx+ \int_{\R^3}\la V(x)uvdx+ \int_{\R^3}K(x)\phi_u^t uv dx  -\int_{\R^3}f(u)vdx
\]
for any $u,v\in E_\la$. It is clear that if $u$ is a critical points of $I_\la$, then the pair $(u,\phi_u^t)$
is a solution of system \eqref{mainequation1}.

Before giving the necessary lemmas for this paper, it is important to stress that the conditional assumptions in Theorem \ref{maintheorem1} and Theorem \ref{maintheorem2} are always true for simplicity. By simple calculations, we can deduce from $(f_1)$ and $(f_2)$ that
\begin{equation}\label{growth1}
  |f(u)|\leq \epsilon|u|+C_\epsilon |u|^{q-1}\ \ \text{and}\ \ |F(u)|\leq \epsilon u^2+C_\epsilon |u|^{q}.
\end{equation}
It follows from $(f_1)$ and $(f_2)$ that there exists a constant $C>0$ such that
 \begin{equation}\label{growth2}
F(u)\geq C|u|^\gamma.
\end{equation}

\begin{lemma}\label{aaaaa}
Assume $K(x)\in L^{\frac{6}{4s +2t-3}}(\R^3)$ with $4s+2t>3$ and $s\geq t$, then the following properties are true:
\begin{enumerate}[$(a)$]
\item If $u\in H^s(\R^3)$ and we set $u_\theta(x):=\theta^{s+t}u(\theta x)$ for $\theta\in\R^+$, then
\[
\int_{\R^3}\phi_{u_\theta}^tu_\theta^2dx=\theta^{4s+2t-3}\int_{\R^3}\phi_{u}^tu^2dx<+\infty.
\]
 \item $\phi_{u(\cdot+y)}^t=\phi_{u}^t(x+y)$.
 \item If $u_n\rightharpoonup u$ in $H^s(\R^3)$, then $\phi_{u_n}^t\rightharpoonup \phi_{u}^t$ in $D^{t,2}(\R^3)$.
\end{enumerate}
\end{lemma}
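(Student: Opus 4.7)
The plan is to derive (a) and (b) by direct manipulation of the Riesz integral representation \eqref{transform2}, and to prove (c) by a compactness argument based on the defining identity \eqref{transform1} combined with a sharp H\"older estimate dictated by the integrability hypothesis on $K$.

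For part (a), I would substitute $u_\theta(y)=\theta^{s+t}u(\theta y)$ into \eqref{transform2} and perform the change of variables $z=\theta y$; bookkeeping of the powers of $\theta$ coming from $u_\theta^2$, the Jacobian, and the homogeneity of $|x-y|^{-(3-2t)}$ yields a clean scaling formula for $\phi_{u_\theta}^t$ in terms of $\phi_u^t$. Inserting this into $\int_{\R^3}\phi_{u_\theta}^t u_\theta^2\,dx$ and changing variables once more produces exactly the factor $\theta^{4s+2t-3}$, while finiteness of the right-hand side follows directly from \eqref{transform0} together with \eqref{transform4}. Part (b) is analogous: either change variables directly in \eqref{transform2}, or observe that both $\phi_{u(\cdot+y)}^t$ and $\phi_u^t(\cdot+y)$ are weak solutions in $D^{t,2}(\R^3)$ of the same Poisson equation with source $K(x)u^2(x+y)$, and invoke uniqueness via the Lax-Milgram theorem that was used to construct $\phi_u^t$ in the first place.

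For part (c), since $u_n\rightharpoonup u$ in $H^s(\R^3)$ the sequence $\{u_n\}$ is bounded, so \eqref{transform4} gives a uniform bound $\|\phi_{u_n}^t\|_{D^{t,2}(\R^3)}\leq C$; by reflexivity, up to a subsequence $\phi_{u_n}^t\rightharpoonup \Phi$ in $D^{t,2}(\R^3)$ for some $\Phi$. Testing \eqref{transform1} against an arbitrary fixed $v\in D^{t,2}(\R^3)$ gives
\begin{equation*}
\int_{\R^3}(-\Delta)^{t/2}\phi_{u_n}^t(-\Delta)^{t/2}v\,dx=\int_{\R^3}K(x)u_n^2 v\,dx,
\end{equation*}
whose left side converges to $\int_{\R^3}(-\Delta)^{t/2}\Phi(-\Delta)^{t/2}v\,dx$ by definition of weak convergence. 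For the right side, Lemma \ref{imbedding} produces (after a further subsequence) $u_n\to u$ a.e.\ in $\R^3$; together with the uniform $L^{2_s^*}$ bound inherited from $H^s$, this yields $u_n^2\rightharpoonup u^2$ weakly in $L^{3/(3-2s)}(\R^3)$. The H\"older exponents $\tfrac{4s+2t-3}{6}+\tfrac{3-2s}{3}+\tfrac{3-2t}{6}=1$ close exactly because of the hypothesis $K\in L^{\frac{6}{4s+2t-3}}(\R^3)$, hence $Kv\in L^{3/(2s)}(\R^3)$ lies in the dual of $L^{3/(3-2s)}(\R^3)$ and the pairing $\int K u_n^2 v\,dx$ passes to the limit. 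Uniqueness for \eqref{transform1} then identifies $\Phi=\phi_u^t$, and the standard subsequence argument promotes the weak convergence to the full sequence. The principal obstacle is precisely this last passage to the limit: strong convergence of $u_n^2$ in $L^{3/(3-2s)}(\R^3)$ is unavailable (one only has boundedness plus local strong convergence), so the duality argument combined with the sharp H\"older balance tied to the exponent $\frac{6}{4s+2t-3}$ in the hypothesis on $K$ is essential.
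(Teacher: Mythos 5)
Your proofs of (a) and (b) follow the paper's route exactly (the paper just collapses your two changes of variables into one computation on the double integral $c_t\iint |x-y|^{2t-3}u_\theta^2(x)u_\theta^2(y)\,dx\,dy$, and establishes finiteness by the same H\"older/embedding chain you cite); one shared caveat is that a ``clean scaling formula for $\phi_{u_\theta}^t$'' only holds for the unweighted Riesz potential --- if the weight $K$ from \eqref{transform2} is kept inside the integral, rescaling produces $K(\cdot/\theta)$ and the identity in (a) fails for nonconstant $K$, a sloppiness the paper itself commits by silently dropping $K$ in its computation. For (c) your mechanism is genuinely different and, I think, cleaner. The paper tests the identity \eqref{transform1} only against $\varphi\in C_0^\infty(\R^3)$, uses the compact local embedding to get $u_n\to u$ in $L^{3/(3-2s)}_{\mathrm{loc}}$, controls the tail by H\"older with $|K|_{6/(4s+2t-3)}\,|u_n+u|_{6/(3-2t)}\,|\varphi|_\infty$, and then appeals to density; this is where the hypothesis $s\geq t$ enters, since it is needed to place $u_n+u$ in $L^{6/(3-2t)}(\R^3)$. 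You instead extract a weak limit $\Phi$ of the bounded sequence $\phi_{u_n}^t$, test against an \emph{arbitrary} $v\in D^{t,2}(\R^3)\hookrightarrow L^{6/(3-2t)}(\R^3)$, and pass to the limit in $\int K u_n^2 v$ by pairing the weak $L^{3/(3-2s)}$-convergence of $u_n^2$ (boundedness plus a.e.\ convergence) against $Kv\in L^{3/(2s)}(\R^3)$; the exponent balance you verify is exact, the density step disappears, and the uniqueness/subsequence argument finishes the proof. What your version buys is that part (c) no longer visibly uses $s\geq t$; what the paper's version buys is that it avoids invoking the ``bounded $+$ a.e.\ convergence $\Rightarrow$ weak convergence in $L^p$'' lemma, working only with the elementary local compactness of Lemma \ref{imbedding}. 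Both are correct.
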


\begin{proof}
$(a)$ Since $4s+3t>3$, then $u\in H^s(\R^3)\hookrightarrow L^{\frac{12}{3+2t}}(\R^3)$ and thus
\[
\int_{\R^3}\phi_{u}^tu^2dx\leq |\phi_u^t|_{2_t^*}|u|_{\frac{12}{3+2t}}^2\stackrel{\mathrm{\eqref{Sobolev2}}}{\leq}
S_t^{-\frac{1}{2}}\|\phi_u^t\|_{D^{t,2}(\R^3)}|u|_{\frac{12}{3+2t}}^2\stackrel{\mathrm{\eqref{transform4}}}{\leq}
C\|u\|^2|u|_{\frac{12}{3+2t}}^2<+\infty.
\]
By means of \eqref{transform2}, one has
\begin{eqnarray*}
\int_{\R^3}\phi_{u_\theta}^tu_\theta^2dx&=& c_t\int_{\R^3}\int_{\R^3}\frac{u^2_\theta(y)u^2_\theta(x)}{|x-y|^{3-2t}}dydx \\
  &=& \theta^{4s+4t}\theta^{3-2t}\theta^{-6}c_t\int_{\R^3}\int_{\R^3}\frac{u^2(y)u^2(x)}{|x-y|^{3-2t}}dydx \\
   &=&\theta^{4s+2t-3}\int_{\R^3}\phi_{u}^tu^2dx.
\end{eqnarray*}

$(b)$ It is a direct consequence of \eqref{transform2}.

$(c)$ If $u_n\rightharpoonup u$ in $H^s(\R^3)$, by Lemma \ref{imbedding} and $\frac{3}{3-2s}\in(1,\frac{6}{3-2s})$, there exists a subsequence still denoted by itself such that $u_n\to u$ in $L_{\text{loc}}^{\frac{3}{3-2s}}(\R^3)$. Since $s\geq t$, then $\frac{6}{3-2t}\in(2,\frac{6}{3-2s}]$ and hence $|u_n+u|$ is uniformly bounded in $L^{\frac{6}{3-2t}}(\R^3)$. On the other hand for any $\varphi\in C_0^{\infty}(\R^3)$, then $\varphi\in L^{\infty}(\R^3)$ and we have that
\[
\bigg|\int_{\R^3} K(x)({u_n}^2-u^2)\varphi dx\bigg|\leq  |\varphi|_\infty |K|_{\frac{6}{4s+2t-3}}
|u_n+u|_{\frac{6}{3-2t}}\bigg( \int_{\supp \varphi}|u_n-u|^{\frac{3}{3-2s}}dx\bigg)^{{\frac{3-2s}{3}}}\to 0,
\]
where $\supp \varphi$ denotes the support of $\varphi$.
Since $C_0^{\infty}(\R^3)$ is dense in $H^s(\R^3)$, then the above formula shows that $(c)$ is true.
 \end{proof}

The following lemma will play an vital role in recovering the compactness for the $(PS)$ sequence, which is similar to the well-known Br\'{e}zis-Lieb lemma \cite{Brezis}.

\begin{lemma}
Assume $K(x)\in L^{\frac{6}{4s +2t-3}}(\R^3)$ with $4s+2t>3$ and $s\geq t$, if $u_n\rightharpoonup u$ in $H^s(\R^3)$ and $u_n\to u$ $a.e.$ in $\R^3$, then we have that
 \begin{equation}\label{weak1}
\int_{\R^3}K(x)\phi_{u_n}^tu_n^2dx-\int_{\R^3}K(x)\phi_{u}^tu^2dx\to 0,
\end{equation}
and
\begin{equation}\label{weak2}
  \int_{\R^3}K(x)\phi_{u_n}^tu_n\varphi dx-\int_{\R^3}K(x)\phi_{u}u\varphi dx\to 0
\end{equation}
for any $\varphi \in C^\infty_0(\R^3)$.
\end{lemma}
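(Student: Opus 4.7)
My strategy is to upgrade the weak convergence $\phi_{u_n}^t\rightharpoonup\phi_u^t$ in $D^{t,2}(\R^3)$ provided by Lemma \ref{aaaaa}$(c)$ to strong convergence, from which both \eqref{weak1} and \eqref{weak2} drop out almost immediately. Put $\psi_n:=\phi_{u_n}^t-\phi_u^t$. By linearity of \eqref{transform1}, $(-\Delta)^t\psi_n=K(u_n^2-u^2)$, so testing against $\psi_n$ itself and using $D^{t,2}(\R^3)\hookrightarrow L^{2_t^*}(\R^3)$ yields
\[
\|\psi_n\|_{D^{t,2}(\R^3)}^2
=\int_{\R^3}K(u_n^2-u^2)\psi_n\wrt x
\leq C\,|K(u_n^2-u^2)|_{6/(3+2t)}\,\|\psi_n\|_{D^{t,2}(\R^3)},
\]
reducing the whole matter to showing $|K(u_n^2-u^2)|_{6/(3+2t)}\to 0$.

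I expect this last convergence to be the main technical step, because on $\R^3$ one only has $u_n\to u$ in $L^r_{\mathrm{loc}}$ for $r<2_s^*$ and not globally; both integrability properties of $K$ will therefore be needed. Split $\R^3=B_R\cup B_R^c$. On $B_R^c$, apply H\"older with the pair $\bigl(\tfrac{6}{4s+2t-3},\tfrac{3}{3-2s}\bigr)$ whose reciprocals add to $\tfrac{3+2t}{6}$ (exactly where the hypothesis $4s+2t>3$ enters):
\[
|K(u_n^2-u^2)|_{6/(3+2t),\,B_R^c}
\leq |K|_{6/(4s+2t-3),\,B_R^c}\,|u_n^2-u^2|_{3/(3-2s)}
\leq C\,|K|_{6/(4s+2t-3),\,B_R^c},
\]
which tends to $0$ as $R\to\infty$ by absolute continuity of the integral, since $|u_n|_{2_s^*}+|u|_{2_s^*}$ is uniformly controlled via \eqref{Sobolev3}. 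On $B_R$ use $K\in L^\infty(\R^3)$ and Cauchy--Schwarz to write
\[
|K(u_n^2-u^2)|_{6/(3+2t),\,B_R}
\leq |K|_\infty\,|u_n+u|_{12/(3+2t),\,B_R}\,|u_n-u|_{12/(3+2t),\,B_R},
\]
and observe that $4s+2t>3$ is exactly equivalent to $12/(3+2t)<2_s^*$, so the compact embedding of Lemma \ref{imbedding} forces the last factor to $0$ in $n$ at any fixed $R$. Taking first $n\to\infty$ and then $R\to\infty$ yields $\psi_n\to 0$ in $D^{t,2}(\R^3)$.

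Once strong convergence is in hand, \eqref{weak1} follows from the expansion $\int K\phi_{u_n}^t u_n^2\wrt x-\int K\phi_u^t u^2\wrt x=\|\phi_{u_n}^t\|_{D^{t,2}}^2-\|\phi_u^t\|_{D^{t,2}}^2$, which equals $\|\psi_n\|_{D^{t,2}}^2+o(1)$ by weak convergence of the cross term in $D^{t,2}(\R^3)$. For \eqref{weak2} I decompose
\[
\int K\phi_{u_n}^t u_n\varphi\wrt x-\int K\phi_u^t u\varphi\wrt x
=\int K\phi_{u_n}^t(u_n-u)\varphi\wrt x+\int Ku\varphi\,\psi_n\wrt x.
\]
For the first summand $|K\varphi|_\infty<\infty$ and $|\phi_{u_n}^t|_{2_t^*}$ is uniformly bounded, so H\"older reduces matters to $|u_n-u|_{6/(3+2t),\,\supp\varphi}\to 0$, which holds by compact embedding since $6/(3+2t)<2_s^*$ trivially. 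For the second, $Ku\varphi\in L^{6/(3+2t)}(\R^3)$ because $\supp\varphi$ is compact, $K,\varphi\in L^\infty$ and $u\in L^{2_s^*}_{\mathrm{loc}}(\R^3)$, so pairing with $\psi_n\to 0$ in $L^{2_t^*}(\R^3)$ closes the argument.
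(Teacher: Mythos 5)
Your argument is correct, but it takes a genuinely different route from the paper's. The paper never upgrades the potentials to strong convergence: it splits the difference in \eqref{weak1} as $A_1+A_2$, where $A_1=\int K\phi_{u_n}^tu^2-\int K\phi_u^tu^2$ is handled by pairing the weak convergence $\phi_{u_n}^t\rightharpoonup\phi_u^t$ in $L^{6/(3-2t)}$ (Lemma \ref{aaaaa}$(c)$) against the fixed function $Ku^2\in L^{6/(3+2t)}$, and $A_2=\int K\phi_{u_n}^t(u_n^2-u^2)$ is killed by a three--exponent H\"older estimate built around the quantity $\int_{\R^3}|K|^{3/(s+t)}|u_n-u|^{3/(s+t)}\,dx\to 0$; the point of the exponent $3/(s+t)$ is to shift part of the integrability of $K$ onto the factor $|u_n-u|$, pushing its exponent strictly below $2_s^*$ so that only $K\in L^{6/(4s+2t-3)}$ is ever used. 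Your proof instead establishes the stronger conclusion $\phi_{u_n}^t\to\phi_u^t$ in $D^{t,2}(\R^3)$ via the clean inequality $\|\phi_{u_n}^t-\phi_u^t\|_{D^{t,2}}\le C|K(u_n^2-u^2)|_{6/(3+2t)}$ and a ball/tail splitting; all your exponent arithmetic checks out (in particular $12/(3+2t)<2_s^*\Leftrightarrow 4s+2t>3$), and both \eqref{weak1} and \eqref{weak2} then follow in one line. What this buys is a reusable strong-convergence statement (the kind of fact invoked again for $\phi_{u_\la}\to\phi_{u_0}$ in Theorem \ref{maintheorem2}) and an argument that nowhere needs $s\ge t$. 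The price is that your local estimate on $B_R$ genuinely requires $K\in L^\infty(\R^3)$ (a purely $L^{6/(4s+2t-3)}$ version of that step runs into the critical exponent $2_s^*$, where local compactness fails); this is harmless here because the paper declares the standing hypotheses of Theorem \ref{maintheorem1} in force throughout Section 2, but it does go beyond the integrability assumption stated in the lemma itself, which the paper's proof does not.
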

\begin{proof}
We point out here that the proof of the case $s=t=1$ for this lemma can be found in \cite{Zhao}, which can be viewed as a special one in our paper. Since $u\in H^s(\R^3)\hookrightarrow L^{\frac{6}{3-2s}}(\R^3)$, then one has
\[
\int_{\R^3}|K|^{\frac{6}{3+2t}} |u|^{\frac{12}{3+2t}}dx\leq\bigg(\int_{\R^3}|K|^{\frac{6}{4s+2t-3}}dx\bigg)^{\frac{4s+2t-3}{3+2t}}
 \bigg(\int_{\R^3}|u|^{\frac{6}{3-2s}}dx\bigg)^{\frac{6-4s}{3+2t}}<+\infty
\]
which implies that $Ku^2\in L^{\frac{6}{3+2t}}(\R^3)$. By $(c)$ of Lemma \ref{aaaaa} and \eqref{Sobolev2}, one has $\phi_{u_n}^t\rightharpoonup\phi_u^t$ in $L^{\frac{6}{3-2t}}(\R^3)$ and thus
\[
A_1\triangleq\int_{\R^3}K(x)\phi_{u_n}^tu^2dx- \int_{\R^3}K(x)\phi_{u}^tu^2dx\to 0.
\]

On the other hand, $u_n\rightharpoonup u\in H^s(\R^3)$ gives that $|u_n-u|\rightharpoonup 0$ in $L^{\frac{6}{3-2s}}(\R^3)$ and then $|u_n-u|^{\frac{3}{s+t}}\rightharpoonup 0$ in $L^{\frac{2(s+t)}{3-2s}}(\R^3)$. Since $|K|^{\frac{3}{s+t}}\in L^{\frac{2(s+t)}{4s+2t-3}}(\R^3)$, then
\[
\int_{\R^3} |K|^{\frac{3}{s+t}}|u_n-u|^{\frac{3}{s+t}}dx\to0
\]
which shows that
\begin{eqnarray*}
|A_2| &\triangleq&\bigg|\int_{\R^3}K(x)\phi_{u_n}^tu_n^2dx-\int_{\R^3}K(x)\phi_{u_n}^tu^2dx\bigg|   \\
   &\leq& \bigg(\int_{\R^3}|K|^{\frac{3}{s+t}}|u_n-u|^{\frac{3}{s+t}}dx\bigg)^{\frac{s+t}{3}}
\bigg(\int_{\R^3}|\phi_{u_n}^t|^{\frac{6}{3-2t}}dx\bigg)^{\frac{3-2t}{6}}
\bigg(\int_{\R^3}|u_n+u|^{\frac{6}{3-2s}}dx\bigg)^{\frac{3-2s}{6}}  \\
   &\leq&C  \bigg(\int_{\R^3}|K|^{\frac{3}{s+t}}|u_n-u|^{\frac{3}{s+t}}dx\bigg)^{\frac{s+t}{3}}\to 0.
\end{eqnarray*}
Consequently, we have that
\[
\int_{\R^3}K(x)\phi_{u_n}^tu_n^2dx-\int_{\R^3}K(x)\phi_{u}^tu^2dx=A_1+A_2\to 0.
\]
The proof of formula \eqref{weak2} is totally same as that of \eqref{weak1}, so we omit it.
\end{proof}

As described in Section 1, it is difficult for us to construct a bounded $(PS)$ sequence because the conditions $(AR)$, $(M)$ and $(F)$ do not hold. Thanks to the following well-known proposition, we can do it successfully.

\begin{proposition}\label{proposition}
(See \cite[Theorem 1.1 and Lemma 2.3]{Jeanjean2}) Let $(X,\|\cdot\|)$ be a Banach space and $T\subset R^+$ be an interval, consider a family of $C^1$ functionals on $X$ of the form
$$
\Phi_{\mu}(u)=A(u)-\mu B(u),\ \  \forall \mu\in T,
$$
with $B(u)\geq 0$ and either $A(u)\to +\infty$ or $B(u)\to +\infty$ as $\|u\|\to +\infty$. Assume that there are two points $v_1,v_2\in X$ such that
\[
 c_{\mu}=\inf_{\gamma\in \Gamma}\sup_{\theta\in [0,1]}\Phi_{\mu}(\gamma(\theta))>\max\{\Phi_\mu(v_1),\Phi_\mu(v_1)\},
\ \  \forall \mu\in T,
\]
where
\[
  \Gamma=\{\gamma\in C([0,1],X):\gamma(0)=v_1, \gamma(1)=v_2\}.
\]
 Then, for almost every $\mu\in T$, there is a sequence $\{u_n(\mu)\}\subset X$ such that
\begin{enumerate}[$(a)$]
  \item $\{u_n(\mu)\}$ is bounded in $X$;
\end{enumerate}

\begin{enumerate}[$(b)$]
  \item $\Phi_\mu(u_n(\mu))\to c_\mu$ and $\Phi^{\prime}_\mu(u_n(\mu))\to 0$;
\end{enumerate}

\begin{enumerate}[$(c)$]
  \item the map $\mu\to c_\mu$ is non-increasing and left continuous.
\end{enumerate}
\end{proposition}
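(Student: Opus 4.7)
The plan is to use Lebesgue's theorem on differentiability of monotone functions together with the monotonicity of $\mu\mapsto c_\mu$, in the spirit of Jeanjean's original argument.

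First I would establish property $(c)$, since it drives everything else. Monotonicity is immediate: for $\mu_1<\mu_2$ and any $\gamma\in\Gamma$, the assumption $B\geq 0$ gives $\Phi_{\mu_2}(\gamma(\theta))\leq \Phi_{\mu_1}(\gamma(\theta))$ pointwise, hence $c_{\mu_2}\leq c_{\mu_1}$. For left continuity at $\mu\in T$, I would take $\mu_n\uparrow\mu$, choose a near-optimal path $\gamma_\epsilon$ with $\sup_\theta \Phi_\mu(\gamma_\epsilon(\theta))<c_\mu+\epsilon$, and observe that $\sup_\theta \Phi_{\mu_n}(\gamma_\epsilon(\theta))\leq c_\mu+\epsilon+(\mu-\mu_n)M_\epsilon$, where $M_\epsilon=\sup_\theta B(\gamma_\epsilon(\theta))<\infty$. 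Combined with the reverse inequality from monotonicity, this yields $c_{\mu_n}\to c_\mu$.

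Next I would fix a point $\mu\in T$ at which $c_\cdot$ is differentiable (Lebesgue's theorem gives this for almost every $\mu\in T$). The key estimate is a \emph{difference-quotient trick}. Pick $\mu_n\uparrow\mu$ and, using the definition of $c_{\mu_n}$, select paths $\gamma_n\in\Gamma$ with
\[
\sup_{\theta\in[0,1]}\Phi_{\mu_n}(\gamma_n(\theta))\leq c_{\mu_n}+(\mu-\mu_n).
\]
Call $\theta\in[0,1]$ \emph{admissible} if $\Phi_\mu(\gamma_n(\theta))\geq c_\mu-(\mu-\mu_n)$. For any admissible $\theta$, the identity $\Phi_{\mu_n}-\Phi_\mu=(\mu-\mu_n)B$ yields
\[
B(\gamma_n(\theta))=\frac{\Phi_{\mu_n}(\gamma_n(\theta))-\Phi_\mu(\gamma_n(\theta))}{\mu-\mu_n}\leq \frac{c_{\mu_n}-c_\mu}{\mu-\mu_n}+2,
\]
and the right-hand side is bounded as $n\to\infty$ thanks to differentiability of $c_\cdot$ at $\mu$. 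Therefore $B(\gamma_n(\theta))\leq K$ uniformly on the admissible set, and since $A=\Phi_\mu+\mu B$, also $A(\gamma_n(\theta))$ is uniformly bounded there. The coercivity hypothesis (either $A\to\infty$ or $B\to\infty$ as $\|u\|\to\infty$) then forces $\|\gamma_n(\theta)\|\leq R$ uniformly over admissible $\theta$.

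Finally I would extract the $(PS)_{c_\mu}$ sequence for $\Phi_\mu$. Because $c_\mu>\max\{\Phi_\mu(v_1),\Phi_\mu(v_2)\}$, admissible $\theta$ stay away from the endpoints. A standard deformation/pseudo-gradient argument (or Ekeland's variational principle applied on the set of paths close to $\gamma_n$) produces $u_n\in X$ within a small $X$-distance of some admissible point on $\gamma_n$, satisfying $\Phi_\mu(u_n)\to c_\mu$ and $\Phi_\mu'(u_n)\to 0$; one may shrink the admissibility threshold at rate $o(1)$ so that the deformation distance is controlled and the uniform bound $\|u_n\|\leq R+1$ persists. This delivers properties $(a)$ and $(b)$ simultaneously. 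The main obstacle is precisely this last coupling step: one must perform the deformation/min-max extraction while respecting the a priori bound from the difference-quotient trick, i.e.\ one should only deform along directions that keep $u$ inside the ball where $B$, and hence $\|\cdot\|$, remain controlled. Choosing a pseudo-gradient that is truncated outside this ball, and verifying that the Mountain-Pass value cannot be lowered past $c_\mu$ inside it, is the delicate technical point.
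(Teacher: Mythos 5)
The paper offers no proof of this proposition: it is quoted directly from Jeanjean \cite{Jeanjean2} (Theorem 1.1 and Lemma 2.3 there), so there is nothing in the text itself to compare against. Your sketch is, in substance, Jeanjean's own argument and is correct in all the quantitative steps you write out: monotonicity of $\mu\mapsto c_\mu$ from $B\geq 0$, left continuity via a near-optimal path (note $M_\epsilon<\infty$ because $B$ is continuous and $\gamma_\epsilon([0,1])$ is compact), Lebesgue differentiability almost everywhere, the difference-quotient bound $B(\gamma_n(\theta))\leq (c_{\mu_n}-c_\mu)/(\mu-\mu_n)+2$ on the admissible set (whose right-hand side converges to $-c'(\mu)+2$), the bound on $A=\Phi_\mu+\mu B$ there since $\Phi_\mu\leq\Phi_{\mu_n}\leq c_{\mu_n}+(\mu-\mu_n)$, and hence the uniform bound $\|\gamma_n(\theta)\|\leq R$ by coercivity. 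The only part you leave at the level of a sketch is the final extraction, and you correctly identify it as the delicate point; the standard way to close it is by contradiction: if no bounded $(PS)_{c_\mu}$ sequence exists, then $\|\Phi_\mu'(u)\|_{*}\geq\alpha>0$ on the set $\{u:\|u\|\leq R+1,\ |\Phi_\mu(u)-c_\mu|\leq\alpha\}$, and a quantitative deformation lemma supported in that set pushes $\max_\theta\Phi_\mu(\gamma_n(\theta))$ strictly below $c_\mu$ for $n$ large -- the non-admissible points already satisfy $\Phi_\mu<c_\mu-(\mu-\mu_n)$ and the endpoints lie strictly below $c_\mu$ by hypothesis, so they are not disturbed -- contradicting the definition of $c_\mu$. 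With that standard step supplied, your reconstruction is a complete and faithful proof of the cited result.
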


Letting $T=[\delta,1]$, where $\delta\in (0,1)$ is a positive constant. To apply Proposition \ref{proposition}, we will introduce a family of $C^1$ functionals on $X=E_\la$ with the form
\begin{equation}\label{IVK}
  I_{\lambda,\mu}(u)= \frac{1}{2}\int_{\R^3}|(-\Delta)^{\frac{s}{2}} u|^2+\la V(x)|u|^2dx+\frac{1}{4}\int_{\R^3}K(x)\phi_{u}^tu^2dx
 -\mu\int_{\R^3}F(u)dx.
\end{equation}
 Then let $I_{\lambda,\mu}(u)=A(u)-\mu B(u)$, where
$$
A(u)=\frac{1}{2}\int_{\R^3} |(-\Delta)^{\frac{s}{2}} u|^2+\la V(x)|u|^2dx+\frac{1}{4}\int_{\R^3}K(x)\phi_{u}^tu^2dx\to+\infty \ \  \text{as}\ \ \|u\|_\la\to+\infty,
$$
and
$$
B(u)=\int_{\R^3}F(u)dx\geq 0.
$$
It is clear that $I_{\lambda,\mu}$ is a well-defined $C^1$ functional on the space $E_\la$, and for all $u,v\in E_\la$, one has
\[
 \langle I^{\prime}_{\lambda,\mu}(u),v\rangle =\int_{\R^3}(-\Delta)^{\frac{s}{2}}u(-\Delta)^{\frac{s}{2}}v+\la V(x)uvdx +\int_{\R^3}K(x)\phi_u^tuvdx
   -\mu\int_{\R^3}f(u)vdx.
\]

We now in a position to verify the Mountain-pass geometry for the functional $I_{\lambda,\mu}$.

 \begin{lemma}\label{2Mountpass}
The functional $I_{\lambda,\mu}$ possesses a Mountain-pass geometry, that is,
\begin{enumerate}[$(a)$]
  \item there exists $v\in E\setminus\{0\}$ independent of $\mu$ such that $I_{\lambda,\mu}(v)\leq 0$ for all $\mu\in [\delta,1]$;
  \item $c_{\la,\mu}\triangleq\inf_{\gamma\in\Gamma}\sup_{\theta\in [0,1]}I_{\lambda,\mu}(\gamma(\theta))>\max\{I_{\lambda,\mu}(0),I_{\lambda,\mu}(v)\}$ for all $\mu\in [\delta,1]$, where
$$
\Gamma=\{\gamma\in C([0,1],E):\gamma(0)=0, \gamma(1)=v\}.
$$
\item there exists $M_0>0$ independent of $\la$ and $\mu$ such that $c_{\la,\mu}\leq M_0$
\end{enumerate}
\end{lemma}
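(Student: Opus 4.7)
The plan is to exhibit a single test function $v$, independent of $\la$ and $\mu$, whose support lies in $\Om = \mathrm{int}\,V^{-1}(0)$. Since $V \equiv 0$ on $\overline{\Om}$, the $\la V$ contribution to $I_{\la,\mu}(v)$ vanishes identically and every estimate becomes automatically independent of $\la$. Fix $x_0 \in \Om$ and $r>0$ with $B_r(x_0) \subset \Om$, pick $u_0 \in C_c^\infty(B_r(x_0))$ with $u_0 \not\equiv 0$, and set $u_\theta(x) := \theta^{s+t}u_0\bigl(x_0 + \theta(x-x_0)\bigr)$; then $\mathrm{supp}(u_\theta) \subset B_{r/\theta}(x_0) \subset \Om$ for all $\theta \geq 1$. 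Plugging $u_\theta$ into $I_{\la,\mu}$ the $V$-term vanishes, the kinetic term equals $\tfrac{1}{2}\theta^{4s+2t-3}\int|(-\Delta)^{s/2}u_0|^2\,dx$, and the Poisson term is $O(\theta^{4s+2t-3})$ (apply Lemma~\ref{aaaaa}(a) together with $K \in L^\infty(\R^3)$ to the integral representation \eqref{transform2}). From $(f_3)$ one sees that $z \mapsto F(z)/z^\gamma$ is nondecreasing on $(0,\infty)$, so $F(z) \geq C|z|^\gamma$ for $z \geq z_0$ with some $z_0 > 0$, which after the change of variable $y = \theta x$ gives $\int F(u_\theta)\,dx \geq C'\theta^{\gamma(s+t)-3}$ for $\theta$ large. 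Because $\gamma > (4s+2t)/(s+t)$ in $(f_3)$, the exponent $\gamma(s+t)-3$ strictly exceeds $4s+2t-3$; therefore
\[
I_{\la,\mu}(u_\theta) \leq C_1 \theta^{4s+2t-3} - \delta C_2 \theta^{\gamma(s+t)-3} \to -\infty
\]
uniformly for $\mu \in [\delta,1]$ and $\la \geq 1$. Fixing $\theta_0$ so large that $I_{\la,\mu}(u_{\theta_0}) \leq 0$ and declaring $v := u_{\theta_0}$ yields $(a)$.

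For $(b)$, combine \eqref{growth1} with $|u|_r \leq d_r\|u\| \leq d_r\|u\|_\la$ (valid for $\la \geq 1$, since $V \geq 0$) coming from \eqref{Sobolev3}, plus the non-negativity of the Poisson term, to obtain
\[
I_{\la,\mu}(u) \geq \tfrac{1}{2}\|u\|_\la^2 - \varepsilon d_2^2\|u\|_\la^2 - C_\varepsilon d_q^q \|u\|_\la^q.
\]
Choose $\varepsilon < 1/(4d_2^2)$ and then $\rho > 0$ so small that $\tfrac{1}{4}\rho^2 - C_\varepsilon d_q^q\rho^q \geq \tfrac{1}{8}\rho^2 =: \alpha > 0$; both $\rho$ and $\alpha$ depend only on $q, \varepsilon, C_\varepsilon, d_2, d_q$, hence they are independent of $\la$ and $\mu$. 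Since $\|u_\theta\|_\la^2 = \theta^{4s+2t-3}\int|(-\Delta)^{s/2}u_0|^2\,dx \to \infty$ (the $V$-part is zero for $\theta \geq 1$), enlarge $\theta_0$ further so that $\|v\|_\la > \rho$. Each $\gamma \in \Gamma$ must then cross the sphere $\{\|u\|_\la = \rho\}$, yielding $c_{\la,\mu} \geq \alpha > 0 \geq \max\{I_{\la,\mu}(0), I_{\la,\mu}(v)\}$.

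For $(c)$, test the Mountain-Pass infimum with the linear path $\gamma(\theta) := \theta v$, $\theta \in [0,1]$. Because $\mathrm{supp}(v) \subset \Om$, the $\la V$-integral again vanishes, and $F \geq 0$ gives
\[
I_{\la,\mu}(\theta v) \leq \tfrac{\theta^2}{2}\int|(-\Delta)^{s/2}v|^2\,dx + \tfrac{\theta^4}{4}\int K(x)\phi_v^t v^2\,dx \leq M_0,
\]
where $M_0 := \tfrac{1}{2}\int|(-\Delta)^{s/2}v|^2\,dx + \tfrac{1}{4}\int K(x)\phi_v^t v^2\,dx$ depends only on the fixed $v$. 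Hence $c_{\la,\mu} \leq M_0$, independent of $\la$ and $\mu$. The conceptually hardest step is $(a)$: one must find a scaling under which the kinetic and Poisson pieces share a common exponent while the nonlinear piece grows strictly faster, and the condition $\gamma > (4s+2t)/(s+t)$ in $(f_3)$ is calibrated precisely to make this gap positive. The choice $\mathrm{supp}(u_0) \subset \Om$ is what cleanly kills the $\la$-dependence that would otherwise spoil the uniform upper bound required in $(c)$.
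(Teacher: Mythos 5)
Your proof is correct. Parts $(a)$ and $(b)$ follow essentially the same route as the paper: the paper also takes a bump function supported in a ball inside $\Omega$ (assuming w.l.o.g.\ $0\in\Omega$ rather than translating to a general $x_0$), applies the scaling $\theta^{s+t}\psi(\theta x)$ so that the kinetic and Poisson terms both carry the exponent $4s+2t-3$ while the nonlinear term grows like $\theta^{(s+t)\gamma-3}$, and gets the small-sphere lower bound from \eqref{growth1} and \eqref{Sobolev3} exactly as you do. (The paper invokes its global bound \eqref{growth2}, $F(z)\geq C|z|^{\gamma}$, where you derive the monotonicity of $F(z)/z^{\gamma}$ from $(f_3)$ and only use the bound for $z\geq z_0$; your version is the more careful one and suffices after restricting to a level set of $u_0$.)

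Part $(c)$ is where you genuinely diverge, and your route is arguably cleaner. The paper tests $c_{\la,\mu}$ with the \emph{scaling} path $\theta\mapsto \psi_{\theta\theta_0}$ and bounds $c_{\la,\mu}\leq \max_{\sigma\geq 0}I_{\la,\delta}(\psi_{\sigma})$; but for $\sigma<1$ the support of $\psi_{\sigma}$ is $B_{\rho_0/\sigma}(0)$, which need not stay inside $\Omega$, so along that portion of the path the $\la V$-term does not obviously vanish and the claimed $\la$-independence of the bound requires an extra word. Your linear path $\theta\mapsto \theta v$ keeps the support fixed inside $\Omega$ for every $\theta\in[0,1]$, kills the $V$-term identically, and yields the explicit constant $M_0=\tfrac12\int|(-\Delta)^{s/2}v|^2\,dx+\tfrac14\int K\phi_v^t v^2\,dx$ by simply discarding $-\mu\int F\leq 0$; this sidesteps the support issue entirely at the cost of a (harmlessly) cruder constant. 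Both approaches deliver an $M_0$ independent of $\la$ and $\mu$, which is all that is needed downstream in \eqref{proof12}.
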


\begin{proof}
$(a)$ $\Omega$ is an open nonempty set in $\R^3$ by $(V_3)$, without loss of generality, we assume $0\in \Omega$ and then there exists $\rho_0>0$ such that $B_{\rho_0}(0)\subset \Omega$. Let $\psi\in C_0^\infty(\R^3)$ satisfy that $\supp \psi\subset B_{\rho_0}(0)$ and $\psi_\theta=\theta^{s+t}\psi(\theta x)$, then $\supp \psi_\theta\subset B_{\rho_0}(0)$ if $\theta>1$. Hence for $\theta>1$ and $V(x)\equiv 0$ in $\Omega$, one has
\[
0\leq \int_{\R^3}V(x)\psi_\theta^2 dx= \int_{\supp \psi}V(x)\psi_\theta^2 dx
\leq \int_{B_{\rho_0}(0)}V(x)\psi_\theta^2 dx\leq \int_{\Omega}V(x)\psi_\theta^2 dx=0.
\]
In view of Lemma \ref{aaaaa} $(a)$ and \eqref{growth2}, we have that
 \begin{eqnarray}\label{Mountainpass}
\nonumber I_{\la,\delta}(\psi_\theta)&\leq& \frac{\theta^{4s+2t-3}}{2}\int_{\R^3}|(-\Delta)^{\frac{s}{2}}\psi|^2dx
 +\theta^{4s+2t-3}|K|_\infty\int_{\R^3}\phi_{\psi}^t\psi^2dx-\theta^{-3}\delta\int_{\R^3}F(\theta^{s+t}\psi)dx\\
\nonumber &\leq& \frac{\theta^{4s+2t-3}}{2}\int_{\R^3}|(-\Delta)^{\frac{s}{2}}\psi|^2dx
 +\theta^{4s+2t-3}|K|_\infty\int_{\R^3}\phi_{\psi}^t\psi^2dx-\theta^{(s+t)\gamma-3}\delta\int_{\R^3}|\psi|^\gamma dx\\
 &\to&-\infty
\end{eqnarray}
as $\theta\to+\infty$ because $\gamma>\frac{4s+2t}{s+t}$. Therefore we can take $v=\psi_{\theta_0}$ for some sufficiently large $\theta_0$, thus $I_{\la,\mu}(v)\leq I_{\la,\delta}(v)<0$ for all $\mu\in[\delta,1]$.

$(b)$ By means of \eqref{Sobolev3} and \eqref{growth1}, one has
\[
I_{\lambda,\mu}(u)\geq \frac{1}{2}\|u\|^2_\la-\epsilon\|u\|^{2}_\la-C_\epsilon\|u\|^{q}_\la.
\]
Let $\epsilon=\frac{1}{4}$, then $I_{\lambda,\mu}(u)>0$ when $q>2$ and $\|u\|_\la=\rho>0$ small.

$(c)$ let $\widetilde{\gamma}(\theta)=v_\theta=\theta^{s+t}v(\theta x)$, where $v$ is given by $(a)$. Recalling the definition of $e$ and $\Gamma$ given by $(b)$, one has $\widetilde{\gamma}\in \Gamma$. Therefore we have that
\[
  c_\la\triangleq\inf_{\gamma\in \Gamma}\max_{\theta\in [0,1]}I_{\la,\mu}(\gamma(\theta))\leq \max_{\theta\in [0,1]}I_{\la,\mu}(v_\theta)\leq \max_{\theta\geq 0}I_{\la,\delta}(\psi_\theta).
\]
Using \eqref{Mountainpass}, $I_{\la,\delta}(\psi_\theta)\to -\infty$ as $\theta\to\infty$. Also we have $I_{\la,\delta}(\varphi_\theta)>0$ for $\theta>0$ small enough. Consequently, $c_\la\leq M_0<+\infty$, where $M_0$ is independent on $\la$ and $\mu$.
\end{proof}

\section{The proof of Theorem \ref{maintheorem1}}
In this section, we will prove the Theorem \ref{maintheorem1} in detail. Firstly we
we introduce the following Poho\u{z}aev identity (see \cite{Teng2}):

\begin{lemma}\label{Pohozaev}
(Poho\u{z}aev identity) Let $u\in H^s(\R^3)$ be a critical point of the functional $I_{\la,\mu}$ ($\forall \mu\in[\delta,1]$) given by \eqref{IVK}, then we have the following Poho\u{z}aev identity:
\begin{equation}\label{Pohozaev1}
 \begin{gathered}
P_{\la,\mu}(u)\triangleq \frac{3-2s}{2}\int_{\R^3}|(-\Delta)^{\frac{s}{2}}u|^2dx+\frac{3}{2}\int_{\R^3}\la V(x)|u|^2dx
+\frac{1}{2}\int_{\R^3}\la(x,\nabla V)|u|^2dx\hfill\\
+\frac{2t+3}{4}\int_{\R^3}K(x)\phi_u^t u^2 dx+\frac{1}{2}\int_{\R^3}(x,\nabla K)\phi_u^t u^2 dx
-3\mu\int_{\R^3}F(u)dx\equiv0.\hfill\\
 \end{gathered}
\end{equation}
\end{lemma}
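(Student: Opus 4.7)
My plan is to establish \eqref{Pohozaev1} via the Caffarelli--Silvestre $s$-harmonic extension, which lowers $(-\Delta)^s$ to a local degenerate-elliptic Neumann problem on the upper half space $\R^{3+1}_+=\{(x,y):x\in\R^3,\,y>0\}$ and makes the classical Pohozaev multiplier accessible. Specifically, let $w\in H^1(\R^{3+1}_+;y^{1-2s})$ solve $\mathrm{div}(y^{1-2s}\nabla w)=0$ in $\R^{3+1}_+$ with $w(\cdot,0)=u$ on $\R^3$, so that (up to a positive constant) $-\lim_{y\to 0^+}y^{1-2s}\partial_y w=(-\Delta)^s u$. Because $u$ is a critical point of $I_{\la,\mu}$, equation \eqref{mainequation5} provides the Neumann datum $-\lim_{y\to 0^+}y^{1-2s}\partial_y w=\mu f(u)-\la V(x)u-K(x)\phi_u^t u$ on $\R^3$. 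The nonlocal Schr\"odinger--Poisson coupling $K(x)\phi_u^t$ is then handled directly through its Riesz representation \eqref{transform2}, avoiding a second extension.

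Next I would test the extension PDE against the truncated multiplier $\eta_R(|x|/R)\bigl(x\cdot\nabla_x w+y\partial_y w\bigr)$ for a smooth cutoff $\eta_R$ supported in $\{|x|\le 2R\}$, and integrate by parts on $\R^{3+1}_+$. The standard Pohozaev manipulation for the degenerate-elliptic operator $\mathrm{div}(y^{1-2s}\nabla\cdot)$ collapses the interior integrals as $R\to\infty$ and leaves the boundary identity
\[
\tfrac{3-2s}{2}\int_{\R^3}|(-\Delta)^{s/2}u|^2\,dx=\int_{\R^3}\bigl[\mu f(u)-\la V(x)u-K(x)\phi_u^t u\bigr](x,\nabla u)\,dx.
\]
Each piece on the right is then integrated by parts on $\R^3$ via the elementary identity $\int g\,(x,\nabla h)\,dx=-\int\bigl[3g+(x,\nabla g)\bigr]h\,dx$: the $V$-term yields $\tfrac32\int\la Vu^2+\tfrac12\int\la(x,\nabla V)u^2$; the $f$-term yields $-3\mu\int F(u)$ via $f(u)(x,\nabla u)=(x,\nabla F(u))$; and the coupling, after inserting \eqref{transform2} and applying Fubini together with the kernel homogeneity $(x\cdot\nabla_x+y\cdot\nabla_y)|x-y|^{2t-3}=(2t-3)|x-y|^{2t-3}$, produces exactly $\tfrac{2t+3}{4}\int K\phi_u^t u^2\,dx+\tfrac12\int(x,\nabla K)\phi_u^t u^2\,dx$. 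Collecting the pieces gives $P_{\la,\mu}(u)\equiv 0$.

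The main technical obstacle is the rigorous cutoff passage $R\to\infty$: the bare multiplier $(x,\nabla_x w)$ is not in the weighted energy space, so one must control both the interior cutoff error involving $\nabla\eta_R$ and the equatorial boundary contribution on $\{|x|=R,\,y>0\}$. These remainders are tamed by the integrability afforded by $(V_4)$, $(K)$, the growth estimates \eqref{growth1}--\eqref{growth2}, and the a priori regularity $u\in H^s(\R^3)\cap L^\infty_{\mathrm{loc}}(\R^3)$ of critical points (obtained by standard Moser bootstrapping), which together guarantee that $Vu^2$, $(x,\nabla V)u^2$, $K\phi_u^t u^2$, $(x,\nabla K)\phi_u^t u^2$, and $F(u)$ all lie in $L^1(\R^3)$. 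A secondary delicate point is the symmetric Fubini manipulation of the double Riesz integral for the Schr\"odinger--Poisson term, but this reduces to a routine calculation once the kernel's scaling is exploited. Since the identity is quoted from \cite{Teng2}, the expected write-up is a brief verification along these lines rather than a new derivation from scratch.
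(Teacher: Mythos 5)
The paper never proves this lemma at all: it simply quotes the identity from \cite{Teng2}. Your extension-based derivation is therefore not an alternative route but essentially the argument of the cited reference itself --- Caffarelli--Silvestre extension, the truncated multiplier $x\cdot\nabla_x w+y\,\partial_y w$, term-by-term integration by parts on $\R^3$, and Euler's homogeneity relation for the Riesz kernel to handle the nonlocal coupling. The strategy is the right one, and your symmetrized Fubini computation for the Poisson term does produce the coefficients $\tfrac{2t+3}{4}$ and $\tfrac12$.

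There is, however, a sign error in your displayed boundary identity which, taken literally, prevents the pieces from collecting to $P_{\la,\mu}(u)\equiv 0$. The correct relation is $\int_{\R^3}(-\Delta)^s u\,(x,\nabla u)\,dx=-\frac{3-2s}{2}\int_{\R^3}|(-\Delta)^{s/2}u|^2\,dx$ (check $s=1$: $\int(-\Delta u)(x,\nabla u)=-\frac{N-2}{2}\int|\nabla u|^2$), so the extension argument yields
\[
-\frac{3-2s}{2}\int_{\R^3}|(-\Delta)^{s/2}u|^2\,dx=\int_{\R^3}\bigl[\mu f(u)-\la V(x)u-K(x)\phi_u^t u\bigr](x,\nabla u)\,dx,
\]
with a minus sign on the left. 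By the very computations you describe, the right-hand side equals $-3\mu\int F(u)+\frac32\int\la Vu^2+\frac12\int\la(x,\nabla V)u^2+\frac{2t+3}{4}\int K\phi_u^t u^2+\frac12\int(x,\nabla K)\phi_u^t u^2$; equating this to $+\frac{3-2s}{2}\int|(-\Delta)^{s/2}u|^2$ as you wrote gives an identity in which the kinetic term carries the opposite sign from all the other terms of $P_{\la,\mu}$, and in the model case $V=K=0$ it would read $\frac{3-2s}{2}\int|(-\Delta)^{s/2}u|^2=-3\mu\int F(u)\le 0$, which is absurd. Once the sign is corrected the assembly does give $P_{\la,\mu}(u)\equiv 0$. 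The remaining issues you flag --- the cutoff passage $R\to\infty$, the a priori regularity and decay of critical points, and the integrability of $Vu^2$, $(x,\nabla V)u^2$, $K\phi_u^t u^2$, $(x,\nabla K)\phi_u^t u^2$ under $(V_4)$ and $(K)$ --- are indeed the genuine technical content of a rigorous proof and are what \cite{Teng2} supplies.
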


\begin{lemma}\label{PScondition}
Let $\{u_n\}$ be a bounded $(PS)$ sequence of the functional $I_{\la,\mu}$ ($\forall \mu\in[\delta,1]$) at the level $c>0$, then for any $M>c$, there exists $\Lambda=\Lambda(M)>0$ such that $\{u_n\}$ contains a strongly convergent subsequence in $E_\la$ for all $\la>\Lambda$.
\end{lemma}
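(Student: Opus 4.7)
The plan is to combine three ingredients: a weak limit extraction, a steep--well argument giving strong convergence in $L^r$ for $r\in[2,2_s^*)$, and the Br\'ezis--Lieb type identity \eqref{weak1} to upgrade this to strong convergence in $E_\la$. The dependence $\Lambda=\Lambda(M)$ will be forced by the second ingredient.

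First, since $\{u_n\}$ is bounded in $E_\la\hookrightarrow H^s(\R^3)$, I pass to a subsequence with $u_n\rightharpoonup u$ in $E_\la$, $u_n\to u$ a.e.\ in $\R^3$, and $u_n\to u$ in $L^r_{\text{loc}}(\R^3)$ for every $r\in[1,2_s^*)$. Using $I_{\la,\mu}(u_n)\to c<M$, $\langle I_{\la,\mu}'(u_n),u_n\rangle=o(1)$, the Poho\u{z}aev identity of Lemma \ref{Pohozaev}, the sign condition $(f_3)$ and $\gamma>(4s+2t)/(s+t)$, I expect to obtain a bound $\|u_n\|_\la^2\le C_0=C_0(M)$ with $C_0$ independent of $\la$ for all $n$ large; this is what makes the threshold $\Lambda$ depend only on $M$.

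The heart of the argument, and the main obstacle, is to show that $v_n\triangleq u_n-u\to 0$ strongly in $L^r(\R^3)$ for every $r\in[2,2_s^*)$, once $\la$ is large. Given $\eps>0$, I exploit $(V_2)$ to pick $R>0$ with $|\{V<c\}\cap B_R^c|$ arbitrarily small, and split
\[
\int_{\R^3}v_n^2\wrt x=\int_{B_R}v_n^2\wrt x+\int_{B_R^c\cap\{V\ge c\}}v_n^2\wrt x+\int_{B_R^c\cap\{V<c\}}v_n^2\wrt x.
\]
The middle term is bounded by $(\la c)^{-1}\|v_n\|_\la^2\le 4C_0/(\la c)$, while the last is controlled by $|\{V<c\}\cap B_R^c|^{(2_s^*-2)/2_s^*}|v_n|_{2_s^*}^2$, itself controlled via \eqref{Sobolev3} by $C_0$; both can be pushed below $\eps/3$ by taking $R$ large and then $\la>\Lambda(M)$ large. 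The first term is below $\eps/3$ for $n$ large by the compact embedding $H^s(B_R)\hookrightarrow L^2(B_R)$. Hence $v_n\to0$ in $L^2(\R^3)$, and interpolation with the uniformly bounded $L^{2_s^*}$ norm extends the convergence to every $L^r$ with $r\in[2,2_s^*)$.

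Finally, I upgrade to strong convergence in $E_\la$. The growth bound \eqref{growth1} together with the $L^q$ convergence ($2<q<2_s^*$) yields $\int_{\R^3}f(u_n)u_n\wrt x\to\int_{\R^3}f(u)u\wrt x$; identities \eqref{weak1} and \eqref{weak2} give the corresponding convergence of the Poisson terms and allow me to pass to the limit in $\langle I_{\la,\mu}'(u_n),\varphi\rangle=o(1)$ for $\varphi\in C_0^\infty(\R^3)$, showing $I_{\la,\mu}'(u)=0$. Inserting these limits into $\langle I_{\la,\mu}'(u_n),u_n\rangle=o(1)$ produces $\|u_n\|_\la^2\to\|u\|_\la^2$, and since $E_\la$ is a Hilbert space with $u_n\rightharpoonup u$, this norm convergence forces $u_n\to u$ strongly in $E_\la$, completing the proof.
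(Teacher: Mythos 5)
Your overall architecture (weak limit, steep-well control of the $L^2$ mass of $v_n=u_n-u$, Br\'ezis--Lieb splitting) matches the paper's, but the pivotal claim ``hence $v_n\to 0$ in $L^2(\R^3)$'' does not follow from your three-way splitting, and everything after it leans on that claim. For a \emph{fixed} $\la$ the middle term satisfies only $\int_{B_R^c\cap\{V\ge c\}}v_n^2\,dx\le (\la c)^{-1}\|v_n\|_\la^2$, a quantity of order $1/\la$ that does not tend to $0$ as $n\to\infty$ or as $R\to\infty$. Pushing it below $\eps/3$ requires $\la>\Lambda(\eps)$ with $\Lambda(\eps)\to\infty$ as $\eps\to0$, so for no single value of $\la$ do you obtain $|v_n|_2\to0$; what the argument actually yields is the smallness estimate $\limsup_n|v_n|_2^2\le C/(\la c)$, not convergence. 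Consequently the strong $L^q(\R^3)$ convergence by interpolation, and with it $\int_{\R^3} f(u_n)u_n\,dx\to\int_{\R^3} f(u)u\,dx$ and the norm convergence $\|u_n\|_\la\to\|u\|_\la$, are not justified. (Invoking the eventual conclusion $u_n\to u$ in $E_\la$ to recover $L^2$ convergence would be circular.)

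The paper closes exactly this gap by keeping the estimate quantitative rather than asymptotic: from $|v_n|_2^2\le(\la c)^{-1}\|v_n\|_\la^2+o(1)$ and interpolation one gets $|v_n|_q^q\le C(M)(\la c)^{-\theta}\|v_n\|_\la^2+o(1)$ for some $\theta>0$, after bounding $\|v_n\|_\la^{q-2}$ by a constant depending only on $M$ (which uses $I_{\la,\mu}(u)\ge0$, condition $(f_3)$ and \eqref{weak1}); then the identity $\langle I_{\la,\mu}'(v_n),v_n\rangle=o(1)$ gives $\|v_n\|_\la^2\le\big(\eps C+C_\eps C(M)(\la c)^{-\theta}\big)\|v_n\|_\la^2+o(1)$, and choosing $\eps$ small and then $\la>\Lambda(M)$ so that the bracket is $<1$ forces $\|v_n\|_\la\to0$. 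Two smaller points: the Poho\v{z}aev identity of Lemma \ref{Pohozaev} is stated only for critical points, so you cannot apply it to the approximate critical points $u_n$ to obtain your a priori bound; the paper applies it only to the weak limit $u$ (a genuine critical point by its Step 1) to prove $I_{\la,\mu}(u)\ge0$. The $\la$-independent bound is then derived for $\|v_n\|_\la$, not for $\|u_n\|_\la$, from $M\ge c-I_{\la,\mu}(u)=I_{\la,\mu}(v_n)-\tfrac{1}{\gamma}\langle I_{\la,\mu}'(v_n),v_n\rangle+o(1)$.
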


\begin{proof}
Since $\{u_n\}$ is bounded in $E_\la$, then there exists $u\in E_\la$ such that $u_n\rightharpoonup u$ in $E_\la$, $u_n\to u$ in $L^m_{\text{loc}}(\R^3)$ with $m\in[1,2_s^*)$ and $u_n\to u$ $a.e.$ in $\R^3$. To show the proof clearly, we will split it into several steps:
\vskip0.3cm
\underline{\textbf{Step 1:}} \ \ \ \   $I_{\la,\mu}^{\prime}(u)=0$ \ \ and\ \  $I_{\la,\mu}(u)\geq 0$.
\vskip0.3cm
\noindent To show $I_\la^{\prime}(u)=0$, since $C_0^{\infty}(\R^3)$ is dense in $E_\la$, then it suffices to show
\[
\langle I_{\la,\mu}^{\prime}(u),\varphi\rangle=0\ \ \text{for any}\ \ \varphi\in C_0^{\infty}(\R^3).
\]
It is totally similar to the proof of \cite[(3.2)]{Shen2} that
\[
\int_{\R^3}f(u_n)\varphi dx\to \int_{\R^3}f(u)\varphi dx.
\]
Using the above formula and \eqref{weak2}, one has
\[
\langle I_{\la,\mu}^{\prime}(u),\varphi\rangle=\lim_{n\to\infty}\langle I_{\la,\mu}^{\prime}(u_n),\varphi\rangle=0.
\]
Since $u$ is a critical point of $I_{\la,\mu}$, then by \eqref{Pohozaev1} one has
\begin{eqnarray*}
 I_{\la,\mu}(u) &=& I_{\la,\mu}(u)-\frac{1}{(s+t)\gamma-3}\big[(s+t)\langle I_{\la,\mu}^{\prime}(u),u\rangle-P_{\la.\mu}(u)\big]\\
   &=&\frac{(s+t)\gamma-(4s+2t)}{2\big[(s+t)\gamma-3\big]}\int_{\R^3}|(-\Delta)^{\frac{s}{2}}u|^2dx
   +\frac{s+t}{(s+t)\gamma-3}\int_{\R^3}\big[uf(u)-\gamma F(u)\big]dx\\
   &&+\frac{(s+t)(\gamma-2)}{2\big[(s+t)\gamma-3\big]}\int_{\R^3}\la V(x)u^2dx+
   \frac{1}{2\big[(s+t)\gamma-3\big]}\int_{\R^3}\la(x,\nabla V)u^2dx \\
   &&+\frac{(s+t)\gamma-(4s+2t)}{4\big[(s+t)\gamma-3\big]}\int_{\R^3}K(x)\phi_u^t u^2 dx
   +\frac{1}{2\big[(s+t)\gamma-3\big]}\int_{\R^3}(x,\nabla K)\phi_u^t u^2 dx \\
  &\geq&\frac{(s+t)\gamma-(4s+2t)}{2\big[(s+t)\gamma-3\big]}\int_{\R^3}|(-\Delta)^{\frac{s}{2}}u|^2dx\geq0,
 \end{eqnarray*}
 where we have used the fact $\gamma>\frac{4s+2t}{s+t}$ implies that $(s+t)\gamma>3$.
\vskip0.3cm
\underline{\textbf{Step 2:}} \ \ \ \   $u_n\to u$ \ \ in\ \  $E_\la$.
\vskip0.3cm
\noindent Let $v_n\triangleq u_n-u$, by \eqref{weak1}, \eqref{weak2} and the Br\'{e}zis-Lieb lemma \cite{Brezis}, one has
\begin{equation}\label{vn}
  I_{\la,\mu}(v_n)=I_{\la,\mu}(u_n)-I_{\la,\mu}(u)+o(1)\ \ \text{and}\ \ I^{\prime}_{\la,\mu}(v_n)=I^{\prime}_\la(u_n)+o(1).
\end{equation}
As a consequence of the condition $(V_2)$ and the locally compact Sobolev imbedding theorem, one has
\begin{align*}
   \int_{\R^3}v_n^2dx & =\int_{\{V\geq c\}}v_n^2dx+\int_{\{V<c\}}v_n^2dx=\int_{\{V\geq c\}}v_n^2dx+o(1)  \\
  & \leq \frac{1}{\la c} \int_{\{V\geq c\}}\la V(x)v_n^2dx+o(1)\leq \frac{1}{\la c}\|v_n\|_\la^2+o(1)
\end{align*}
which implies that
 \begin{eqnarray}\label{Lq}
\nonumber \int_{\R^3}|v_n|^qdx&=& \bigg(\int_{\R^3}|v_n|^2dx\bigg)^{\frac{2_s^*-q}{2_s^*-2}}\bigg(\int_{\R^3}|v_n|^{2_s^*}dx\bigg)^{\frac{q-2}{2_s^*-2}} \\
\nonumber  &\leq&\bigg(\frac{1}{\la c}\bigg)^{\frac{2_s^*-q}{2_s^*-2}}\|v_n\|_\la^{2{\frac{2_s^*-q}{2_s^*-2}}}
 S_s^{-\frac{2_s^*}{2}{\frac{q-2}{2_s^*-2}}}\|v_n\|_\la^{2_s^*{\frac{q-2}{2_s^*-2}}}+o(1)  \\
   &=&  \bigg(\frac{1}{\la c}\bigg)^{\frac{2_s^*-q}{2_s^*-2}}
 S_s^{-\frac{2_s^*}{2}{\frac{q-2}{2_s^*-2}}}\|v_n\|_\la^q+o(1)
 \end{eqnarray}
Using $I_{\la,\mu}(u)\geq 0$ in Step 1, $(f_3)$ and \eqref{vn}, we derive
\begin{eqnarray}\label{boundness}
\nonumber  M_0 &\geq &c-I_\la(u)\ \ =\ \ I_\la(v_n)-\frac{1}{\gamma}\langle I_\la^{\prime}(v_n),v_n\rangle +o(1)  \\
   &=&\frac{\gamma-2}{2\gamma}\|v_n\|_\la^2+ \frac{\gamma-4}{4\gamma} \nonumber   \int_{\R^3}K(x)\phi_{v_n}^tv_n^2dx+\int_{\R^3}\big[\frac{1}{\gamma}v_nf(v_n)-F(v_n)\big]dx \\
 &\stackrel{\mathrm{\eqref{weak1}}}{\geq}& \frac{\gamma-2}{2\gamma}\|v_n\|_\la^2+ o(1)
 \stackrel{\mathrm{\eqref{Lr}}}{\geq}\frac{\gamma-2}{2\gamma}  \bigg(2\big|\{V<c\}\big|\bigg)^{-\frac{2}{q}\frac{2_s^*-q}{2_s^*}}
S_s |v_n|_q^2+ o(1)
\end{eqnarray}
when $\la\geq c^{-1}\big|\{V<c\}\big|^{-\frac{2_s^*-2}{2_s^*}}S_s$.

Combing with \eqref{Lq} and \eqref{boundness}, for any $\la\geq c^{-1}\big|\{V<c\}\big|^{-\frac{2_s^*-2}{2_s^*}}S_s$, we have
\begin{eqnarray*}
\int_{\R^3}|v_n|^qdx&\leq& \bigg(\int_{\R^3}|v_n|^qdx\bigg)^{\frac{q-2}{q}}
\bigg(\int_{\R^3}|v_n|^qdx\bigg)^{\frac{2}{q}} \\
  &\leq& \bigg(\frac{2M\gamma}{S_s(\gamma-2)}\bigg)^{\frac{q-2}{2}} \bigg(2\big|\{V<c\}\big|\bigg)^{\frac{q-2}{q}\frac{2_s^*-q}{2_s^*}}
  \bigg(\frac{1}{\la c}\bigg)^{\frac{2}{q}\frac{2_s^*-q}{2_s^*-2}}
 S_s^{-\frac{2_s^*}{q}{\frac{q-2}{2_s^*-2}}}\|v_n\|_\la^2
\end{eqnarray*}
which reveals that
\begin{eqnarray*}
 o(1)&=&\langle I_\la^{\prime}(v_n),v_n\rangle \\
  &=&\|v_n\|_\la^2+\int_{\R^3}K(x)\phi_{v_n}^tv_n^2dx-\int_{\R^3}v_nf(v_n)dx   \\
   &{\geq}& \|v_n\|_\la^2- \epsilon\int_{\R^3}v_n^2dx-C_\epsilon\int_{\R^3}|v_n|^qdx  \\
   &\stackrel{\mathrm{\eqref{Lr}}}{\geq}&\Bigg\{1-\epsilon \bigg(2\big|\{V<c\}\big|\bigg)^{\frac{2_s^*-2}{2_s^*}}
S_s^{-\frac{r}{2}}\\
&&-C_\epsilon\bigg(\frac{2M\gamma}{S_s(\gamma-2)}\bigg)^{\frac{q-2}{2}} \bigg(2\big|\{V<c\}\big|\bigg)^{\frac{q-2}{q}\frac{2_s^*-q}{2_s^*}}
  \bigg(\frac{1}{\la c}\bigg)^{\frac{2}{q}\frac{2_s^*-q}{2_s^*-2}}
 S_s^{-\frac{2_s^*}{q}{\frac{q-2}{2_s^*-2}}}\Bigg\}  \|v_n\|_\la^2
\end{eqnarray*}
when $\la\geq c^{-1}\big|\{V<c\}\big|^{-\frac{2_s^*-2}{2_s^*}}S_s$. Therefore if we take $\epsilon>0$ sufficiently small, then there exists $\Lambda=\Lambda(M)>c^{-1}\big|\{V<c\}\big|^{-\frac{2_s^*-2}{2_s^*}}S_s$ such that $\|v_n\|_\la\to 0$ as $n\to\infty$.
\end{proof}

As a direct consequence Proposition \ref{proposition}, Lemma \ref{2Mountpass} and Lemma \ref{PScondition}, there exist two sequences $\{\mu_n\}\subset[\delta,1]$ and $\{u_n\}\subset E_\la \backslash \{0\}$ (we denote $\{u(\mu_n)$ by $\{u_n\}$ just for simplicity) such that
\begin{equation}\label{proof1}
  I^{\prime}_{\la,\mu_n}(u_n)=0,\ \ I_{\la,\mu_n}(u_n)=c_{\la,\mu_n}\ \ \text{and}\ \ \mu_n\to1^-.
\end{equation}

We are now ready to prove Theorem \ref{maintheorem1}.

\begin{proof}[\textbf{Proof of Theorem \ref{maintheorem1}}]
We first claim that the sequence given by \eqref{proof1} is bounded. In fact, recalling $(c)$ of Lemma \ref{2Mountpass}, \eqref{Pohozaev1}, the assumptions $(V_4)$ and $(K)$, one has
 \begin{eqnarray}\label{proof12}
\nonumber M_0  &\geq&c_{\la,\mu_n}= I_{\la,\mu_n}(u_n)-\frac{1}{(s+t)\gamma-3}\big[(s+t)\langle I_{\la,\mu_n}^{\prime}(u_n),u_n\rangle-P_{\la,\mu_n}(u_n)\big]\\
\nonumber   &=&\frac{(s+t)\gamma-(4s+2t)}{2\big[(s+t)\gamma-3\big]}\int_{\R^3}|(-\Delta)^{\frac{s}{2}}u_n|^2dx
   +\frac{s+t}{(s+t)\gamma-3}\int_{\R^3}\big[u_nf(u_n)-\gamma F(u_n)\big]dx\\
 \nonumber  &&+\frac{(s+t)(\gamma-2)}{2\big[(s+t)\gamma-3\big]}\int_{\R^3}\la V(x)u_n^2dx+
   \frac{1}{2\big[(s+t)\gamma-3\big]}\int_{\R^3}\la(x,\nabla V)u_n^2dx \\
 \nonumber  &&+\frac{(s+t)\gamma-(4s+2t)}{4\big[(s+t)\gamma-3\big]}\int_{\R^3}K(x)\phi_{u_n}^t u_n^2 dx
   +\frac{1}{2\big[(s+t)\gamma-3\big]}\int_{\R^3}(x,\nabla K)\phi_{u_n}^t u_n^2 dx \\
  &\geq&\frac{(s+t)\gamma-(4s+2t)}{2\big[(s+t)\gamma-3\big]}\int_{\R^3}|(-\Delta)^{\frac{s}{2}}u_n|^2dx
 \end{eqnarray}
which shows that $|(-\Delta)^{\frac{s}{2}}u_n|_2$ is bounded. By interpolation inequality, for $q\in(2,2_s^*)$ one has
\begin{equation}\label{proof9}
  \begin{gathered}
|u_n|_q^q\leq |u_n|_2^{2\xi}|u_n|_{2_s^*}^{2_s^*(1-\xi)}\stackrel{\mathrm{\eqref{Lr}}}{\leq} C\|u\|_\la^{2\xi}|u_n|_{2_s^*}^{2(1-\xi)}\hfill\\
\ \ \ \ \ \ \stackrel{\mathrm{\eqref{Sobolev2}}}{\leq}C\|u_n\|_\la^{2\xi}S_s^{-(1-\xi)}|(-\Delta)^{\frac{s}{2}}u_n|_2^{1-\xi}
\leq C\|u_n\|_\la^{2\xi},\hfill\\
\end{gathered}
\end{equation}
where $\xi=\frac{2_s^*-q}{2_s^*-2}\in(0,1)$. Therefore by \eqref{growth2}, one has
\begin{align*}
  M_0  &\geq c_{\la,\mu_n} =I_{\la,\mu_n}(u_n)\geq \frac{1}{2}\|u_n\|_{\la}^2-\epsilon\|u_n\|_\la^2-C_\epsilon |u_n|_q^q \\
    &  \geq \frac{1}{4}\|u_n\|_{\la}^2-C\|u_n\|_\la^{2\xi},
\end{align*}
which implies that $\{u_n\}$ is bounded in $E_\la$ because $\xi\in(0,1)$.

Since $\mu_n\to1^-$, we claim that $\{u_n\}$ is a $(PS)_{c_{\la,1}}$ sequence of the functional $I_\la=I_{\la,1}$. In fact, as a consequence of  Lemma \ref{proposition} $(c)$ we obtain that
\[
\lim_{n\to\infty} I_{\la,1}(u_n)=\bigg(\lim_{n\to\infty}I_{\la,\mu_n}(u_n)
+(\mu_n-1)\int_{\R^3}F(u_n)dx\bigg)\stackrel{\mathrm{\eqref{growth1}}}{=}\lim_{n\to\infty}c_{\la,\mu_n}=c_{\la.1}
\]
and for all $\psi\in H^s(\R^3)\backslash\{0\}$,
\begin{eqnarray*}
 \lim_{n\to\infty} \frac{|\langle I^{\prime}_{\la,1}(u_n),\psi\rangle|}{\|\psi\|}&=& \lim_{n\to\infty}\frac{\big|\langle I^{\prime}_{\la,\mu_n}(u_n),\psi\rangle
+ (\mu_n-1) \int_{\R^3}f(u_n)\psi dx\big|}{\|\psi\|} \\
   &=&\lim_{n\to\infty} \frac{|\mu_n-1|\big|\int_{\R^3}f(u_n)\psi dx\big|}{ \|\psi\|} \\
   &\stackrel{\mathrm{\eqref{growth1}}}{\leq}& \lim_{n\to\infty} |\mu_n-1|(\epsilon\|u_n\|+C_\epsilon\|u_n\|^{q-1})  \to 0,
\end{eqnarray*}
which imply that  $\{u_n\}$ is a $(PS)_{c_{\la,1}}$ sequence of $I_\la=I_{\la,1}$ at the level $c_{\la,1}>0$, where we have used the fact that $\{u_n\}$ is bounded in $E$. Consequently by Lemma \ref{PScondition}, there exists a subsequence still denoted by itself such that $u_n\to u$ in $E$ which implies that $I_{\la}(u)=c_{\la,1}>0$ and $I^{\prime}_{\la}(u)=0$.

Inspired by J. Sun and S. Ma \cite{Sun}, to obtain a ground state solution we set
\[
m=\inf\big\{I_\la(u):u\in E\backslash\{0\},\ \ I_\la^{\prime}(u)=0\big\}.
\]
We claim that $m>0$. Indeed, similar to the Step 1 in the proof of Lemma \ref{PScondition}, one has $m\geq 0$. In order to show $m>0$, we suppose that $m=0$. Take a minimizing sequence $\{w_n\}$ such that $I^{\prime}_\la(w_n)=0$ and $I_\la(w_n)\to 0$. Using $I^{\prime}_\la(w_n)=0$ and \eqref{growth1}, one has
\begin{equation}\label{proof10}
  \|w_n\|^2\leq\|w_n\|_\la^2\leq\int_{\R^3}f(w_n)w_n \leq  \frac{1}{2}\|w_n\|^2+C|w_n|_q^q \stackrel{\mathrm{\eqref{Sobolev3}}}{\leq} \frac{1}{2}\|w_n\|^2+C\|w_n\|^q
\end{equation}
which implies that $\|w_n\|\geq C>0$ for some $C$ independent of $n$. On the other hand, Using $I_\la(w_n)\to 0$ and $I^{\prime}_\la(w_n)=0$, as \eqref{proof12} we have $|(-\Delta)^{\frac{s}{2}}w_n|_2\to 0$. Similar to the Step 1 in the proof of Lemma \ref{PScondition}, $\{w_n\}$ is bounded in $E_\la$. Hence $|w_n|_q\to 0$ by \eqref{proof9}. Using \eqref{proof10} again, we have $0\leq \|w_n\|^2\leq C|w_n|_q^q \to 0$, which is a contradiction!

Suppose that there exists a sequence $\{u_n\}\subset E\backslash\{0\}$ such that $I_\la^{\prime}(u_n)=0$ and $I_\la(u_n)\to m$. We can conclude that $\{u_n\}$ is bounded in $E$, and then $\{u_n\}$ is $(PS)$ sequence at the level $m>0$. By Lemma \ref{PScondition}, passing to a subsequence if necessary, $u_n\to u$ in $E_\la$. Hence we have that $I_\la(u)= m>0$ and $I^{\prime}_\la(u)=0$ which shows that $u$ is a nontrivial critical point of $I_\la$ given by \eqref{functional}. It follows from \cite[Proposition 4.4]{Teng3} that $u$ is positive. Therefore $(u,\phi_u)$ is a positive ground state to system \eqref{mainequation1}. The proof is complete.
 \end{proof}

\section{Concentration for the nontrivial solutions obtained in Theorem \ref{maintheorem1}}
Before we study the concentration results, let us recall the Vanishing lemma for fractional Sobolev space as follows.
\begin{lemma}\label{Vanishing}
\
\big(see \cite[Lemma 2.4]{Secchi}\big) Assume that $\{u_n\}$ is bounded in $H^\alpha(\R^3)$ for $\alpha\in(0,1)$ and satisfies
\[
\lim_{n\to\infty}\sup_{y\in\R^3}\int_{B_\rho(y)}|u_n|^2dx=0,
\]
for some $\rho>0$. Then $u_n\to 0$ in $L^m(\R^3)$ for every $2<m<2_\alpha^*$.
\end{lemma}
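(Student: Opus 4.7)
The plan is to adapt the classical Lions vanishing strategy to the fractional setting via a bounded-overlap cover of $\R^3$ combined with a local Gagliardo--Nirenberg inequality at a specially chosen intermediate exponent. Concretely, I would first establish the single-exponent statement $u_n\to 0$ in $L^{m_0}(\R^3)$ for the distinguished value
\[
m_0:=2+\frac{4\alpha}{3}\in(2,2_\alpha^*),
\]
and then deduce the result for arbitrary $m\in(2,2_\alpha^*)$ by standard Lebesgue interpolation.

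To treat $m_0$, cover $\R^3$ by balls $\{B_\rho(y_i)\}_{i\in\N}$ with the property that each point of $\R^3$ lies in at most $N$ of the enlarged balls $\{B_{2\rho}(y_i)\}$ (a lattice placement suffices), and choose smooth cutoffs $\eta_i$ with $\supp\eta_i\subset B_{2\rho}(y_i)$, $\eta_i\equiv 1$ on $B_\rho(y_i)$, and $\|\nabla\eta_i\|_\infty\le C/\rho$. The choice of $m_0$ is precisely the one for which the scaling exponent $\theta_0$ in the fractional Gagliardo--Nirenberg inequality $\|v\|_{L^{m_0}}\le C\|v\|_{L^2}^{1-\theta_0}\|v\|_{\dot H^\alpha}^{\theta_0}$ satisfies $\theta_0=2/m_0$. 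Applying this to $v=\eta_i u$, raising to the $m_0$-th power, and using a standard localisation estimate of the form $\|\eta_i u\|_{\dot H^\alpha}^2\le C([u]_{H^\alpha(B_{2\rho}(y_i))}^2+\|u\|_{L^2(B_{2\rho}(y_i))}^2)$ yields
\[
\int_{B_\rho(y_i)}|u|^{m_0}\,dx\le C\,\|u\|_{L^2(B_{2\rho}(y_i))}^{\,m_0-2}\Big([u]_{H^\alpha(B_{2\rho}(y_i))}^2+\|u\|_{L^2(B_{2\rho}(y_i))}^2\Big).
\]
Bounding the $L^2$-factor by $\big(\sup_y\|u\|_{L^2(B_{2\rho}(y))}\big)^{m_0-2}$ (and extracting it from the sum), summing over $i$, and invoking the bounded-overlap property to obtain $\sum_i[u]_{H^\alpha(B_{2\rho}(y_i))}^2\le N[u]_{H^\alpha(\R^3)}^2$ and $\sum_i\|u\|_{L^2(B_{2\rho}(y_i))}^2\le N\|u\|_{L^2(\R^3)}^2$, I arrive at
\[
\int_{\R^3}|u|^{m_0}\,dx\le C\Big(\sup_{y\in\R^3}\int_{B_\rho(y)}|u|^2\,dx\Big)^{(m_0-2)/2}\|u\|_{H^\alpha(\R^3)}^{2}.
\]
Applied to $u_n$, the hypothesis drives the supremum to $0$ while $\|u_n\|_{H^\alpha}$ stays bounded, so $u_n\to 0$ in $L^{m_0}(\R^3)$.

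For arbitrary $m\in(2,2_\alpha^*)$ with $m\ne m_0$, standard Hölder interpolation in Lebesgue spaces gives $\|u_n\|_{L^m}\le \|u_n\|_{L^{p_1}}^{\beta}\|u_n\|_{L^{p_2}}^{1-\beta}$ with $(p_1,p_2)=(2,m_0)$ if $m<m_0$ and $(p_1,p_2)=(m_0,2_\alpha^*)$ if $m>m_0$. In either case the factor corresponding to $L^{m_0}$ carries a positive exponent and tends to $0$, while the other factor is uniformly bounded (by hypothesis and by the embedding $H^\alpha\hookrightarrow L^{2_\alpha^*}$), so $\|u_n\|_{L^m}\to 0$. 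The main technical obstacle is the localisation estimate for $\|\eta_i u\|_{\dot H^\alpha}$: unlike the classical case $\alpha=1$, the nonlocal structure of $(-\Delta)^{\alpha/2}$ requires splitting the Gagliardo double integral according to whether both arguments lie in $B_{2\rho}(y_i)$ or one sits outside its support, so that the global seminorm $[u]_{H^\alpha(\R^3)}^2$ does not reappear in the bound for each ball and spoil the finite-sum step.
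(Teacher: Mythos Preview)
The paper does not supply its own proof of this lemma; it simply quotes the result from \cite[Lemma~2.4]{Secchi}. Your proposal is therefore not competing against anything in the paper, and what you have written is a correct outline of the standard Lions argument adapted to the fractional setting, along the lines of the proof in \cite{Secchi}.

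A couple of small points to tighten in a full write-up. First, the localisation estimate
\[
\|\eta_i u\|_{\dot H^\alpha}^2\le C\big([u]_{H^\alpha(B_{2\rho}(y_i))}^2+\|u\|_{L^2(B_{2\rho}(y_i))}^2\big)
\]
is valid, but it needs the cutoffs to be supported \emph{compactly} inside $B_{2\rho}(y_i)$ (say in $B_{3\rho/2}(y_i)$), so that the ``one argument outside the support'' piece of the Gagliardo double integral can be controlled by $\|u\|_{L^2(B_{2\rho}(y_i))}^2$ with a finite constant. You flag exactly this issue at the end, so you are aware of it; just make the choice of support explicit. Second, after summing you obtain the supremum over balls of radius $2\rho$, whereas the hypothesis involves radius $\rho$; this is harmless since any $B_{2\rho}(y)$ is covered by a fixed finite number of balls $B_\rho(\cdot)$, but it is worth a sentence. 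With these details in place, the argument is complete.
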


We adapt the idea used in \cite{Bartsch,Zhao} to prove Theorem \ref{maintheorem2}.

\begin{proof}[\textbf{Proof of Theorem \ref{maintheorem2}}]
For any sequence $\la_n\to\infty$, we denote $\{u_n\}$ to be the positive ground state solutions $\{u_{\la_n}\}$ obtained in Theorem \ref{maintheorem1}. It is similar to the proof in Theorem \ref{maintheorem1} that $\{u_n\}$ is bounded in $H^s(\R^3)$ and going to a subsequence if necessary we can assume that $u_n\rightharpoonup u_0$ in $E$, $u_n\to u_0$ in $L^p_{\text{loc}}(\R^3)$ with $p\in[1,2_s^*)$ and $u_n\to u_0$ in $a.e.$ in $\R^3$. Using Fatou's lemma, one has
\[
0\leq \int_{\R^3\backslash V^{-1}(0)}V(x)u_0^2dx\leq \mathop{\lim\inf}_{n\to\infty}\int_{\R^3}V(x)u_n^2dx\leq \mathop{\lim\inf}_{n\to\infty}\frac{1}{\la_n}\|u_n\|_{\la_n}^2\leq \lim_{n\to\infty}\frac{C}{\la_n}=0
\]
which implies that $u_0=0$ $a.e.$ in $\R^3\backslash V^{-1}(0)$, then we have that $u_0\in H_0^s(\Omega)$ because $\Omega=int V^{-1}(0)$ by $(V_3)$. Now for any $\varphi\in C_0^{\infty}(\Omega)$, and since $\langle I^{\prime}_{\la_n}(u_n),\varphi\rangle=0$, we can easily check that
\[
\int_{\R^3}(-\Delta)^{\frac{s}{2}}u_0(-\Delta)^{\frac{s}{2}}\varphi+\int_{\R^3}K(x)\phi_{u_0}^tu_0\varphi dx-\int_{\R^3}f(u_0)\varphi dx=0.
\]
As $C_0^{\infty}(\Omega)$ is dense in $H_0^s(\Omega)$, $u_0$ is a solution of \eqref{mainequation2}.

We claim that $u_n\to u_0$ in $L^q(\R^3)$ for $q\in(2,2_s^*)$. Arguing it by indirectly, then by Lemma \ref{Vanishing} there exists $\{y_n\}\subset\R^3$, $\rho>0$ and $\delta_0>0$ such that
\[
\int_{B_\rho(y_n)}(u_n-u_0)^2dx\geq \delta_0>0,
\]
where $|y_n|\to\infty$ which implies that $\big|B_\rho(y_n)\cap\{V<c\}\big|\to 0$. By H\"{o}lder's inequality
\[
\int_{B_\rho(y_n)\cap\{V<c\}}(u_n-u_0)^2dx\to 0
\]
which implies that for sufficiently large $n$ one has
\[
\int_{B_\rho(y_n)\cap\{V<c\}}(u_n-u_0)^2dx\leq \frac{\delta_0}{2}.
\]
Therefore for sufficiently large $n$ and $u_0=0$ $a.e.$ in $\R^3\backslash V^{-1}(0)$ give that
\begin{align*}
 \|u_n\|_{\la_n}^2& \geq \la_n\int_{\R^3}V(x)u_n^2dx \geq \la_nc\int_{B_\rho(y_n)\cap \{v\geq c\}}u_n^2dx
= \la_nc\int_{B_\rho(y_n)\cap \{v\geq c\}}(u_n-u_0)^2dx \\
    &=\la_nc\bigg(\int_{B_\rho(y_n)}(u_n-u_0)^2dx-\int_{B_\rho(y_n)\cap \{v<c\}}(u_n-u_0)^2dx\bigg)\geq \frac{\delta_0}{2}\la_nc\to \infty
\end{align*}
which yields a contradiction! Hence $u_n\to u_0$ in $L^q(\R^3)$ for $q\in(2,2_s^*)$ which implies that
\[
\int_{\R^3}f(u_n)u_ndx-\int_{\R^3}f(u_n)u_0dx=o(1)
\]
by the Strass compactness theorem in \cite{Berestycki}.

We now show that $u_n\to u_0$ in $E$. In fact, by $\langle I_{\la_n}^{\prime}u_n,u_n\rangle=\langle I_{\la_n}^{\prime}u_n,u_0\rangle=0$
\begin{equation}\label{proof8}
 \|u_n\|_{\la_n}^2+\int_{\R^3}K(x)\phi_{u_n}^tu_n^2dx=\int_{\R^3}f(u_n)u_ndx
\end{equation}
and
\[
(u_n,u_0)_{\la_n}+\int_{\R^3}K(x)\phi_{u_n}^tu_nu_0dx=\int_{\R^3}f(u_n)u_0dx.
\]
In view of the definition $A_2$ in the proof of \eqref{weak1}, one has
\[
\int_{\R^3}K(x)\phi_{u_n}^tu_n^2dx-\int_{\R^3}K(x)\phi_{u_n}^tu_nu_0dx=o(1).
\]
Hence by the above four formulas we have that
\[
\lim_{n\to\infty}\|u_n\|_{\la_n}^2=\lim_{n\to\infty}(u_n,u_0)_{\la_n}=\lim_{n\to\infty}(u_n,u_0)=\|u_0\|^2.
\]
Also since the norm is lower semicontinuous, then
\[
\|u_0\|^2\leq\mathop{\inf\lim}_{n\to\infty}\|u_n\|^2\leq \mathop{\inf\lim}_{n\to\infty}\|u_n\|_{\la_n}^2,
\]
and thus $u_n\to u_0$ in $E$.

Finally, we show $u_0\not\equiv 0$. Using \eqref{Sobolev2}, \eqref{growth1} and \eqref{proof8}, we drive
\[
\|u_n\|^2\leq \|u_n\|^2_{\la_n}\leq\int_{\R^3}f(u_n)u_ndx
\leq \frac{1}{2}\|u_n\|^2+C\|u_n\|_q^q
\]
which implies $\|u_0\|^2\leq C\|u_0\|^q$ together with $u_n\to u_0$ in $E$. Therefore $u_0\not\equiv 0$ for $q\in(2,2_s^*)$.
The proof is complete.
\end{proof}

\end{document}